\documentclass[12pt,reqno,draft]{article} 
\usepackage{amsmath,amssymb,amsthm,amsfonts, indentfirst}
\usepackage{enumerate,color,bm}
\usepackage{enumitem}
\usepackage{mathtools}
\topmargin=-1.5cm
\oddsidemargin=0cm
\pagestyle{plain}
\textwidth=16cm
\textheight=23cm

\makeatletter
\def\@cite#1#2{[{{\bfseries #1}\if@tempswa , #2\fi}]}
\renewcommand{\section}{%
\@startsection{section}{1}{\z@}
{0.5truecm plus -1ex minus -.2ex}%
{1.0ex plus .2ex}{\bfseries\large}}
\def\@seccntformat#1{\csname the#1\endcsname.\ }
\makeatother

\setlength\arraycolsep{2pt}

\numberwithin{equation}{section} 
\pagestyle{plain}
\theoremstyle{theorem}
\newtheorem{thm}{Theorem}[section]

\newtheorem{lem}[thm]{Lemma}

\theoremstyle{definition}

\newtheorem{remark}{Remark}[section]

\newtheorem*{prth1.1}{Proof of Theorem 1.1}
\newtheorem*{prth1.2}{Proof of Theorem 1.2}
\newtheorem*{prth1.3}{Proof of Theorem 1.3}
\newtheorem*{prcor1.2}{Proof of Corollary 1.2}

\newcommand{\ep}{\varepsilon}
\newcommand{\pa}{\partial}

\newcommand{\R}{\mathbb{R}}

\newcommand{\cl}[1]{{\overline#1}}

\newcommand{\Tmax}{T_{\rm max}}


\begin{document}
\footnote[0]{
    2010{\it Mathematics Subject Classification}\/. 
    Primary: 35B44; 
    Secondary: 35Q92, 92C17.
    }
\footnote[0]{
    {\it Key words and phrases}\/:
    finite-time blow-up; chemotaxis; attraction-repulsion; logistic source.
    }
\begin{center} 
    \Large{{\bf 
    Blow-up phenomena in 
    a parabolic--elliptic--elliptic 
    attraction-repulsion chemotaxis system 
    with superlinear logistic degradation
    }}%
\end{center}
\vspace{5pt}
\begin{center}
    Yutaro Chiyo\footnotemark[1]\footnote[2]{Corresponding author.},\ \ \  
    Monica Marras\footnotemark[3],\ \ \ %
    Yuya Tanaka\footnotemark[1],\ \ \ %
    Tomomi Yokota\footnotemark[1]%
    \footnote[0]{
    E-mail: 
    {\tt ycnewssz@gmail.com},  
    {\tt mmarras@unica.it}, 
    {\tt yuya.tns.6308@gmail.com}, \\ \hspace{17.85mm}
    {\tt yokota@rs.tus.ac.jp}
    }\\
    \vspace{25pt}
    {\small 
    \footnotemark[1]Department of Mathematics, \\
    Tokyo University of Science\\
    1-3, Kagurazaka, Shinjuku-ku, 
    Tokyo 162-8601, Japan}\\
    \vspace{18pt}
    {\small 
    \footnotemark[3]Department of Mathematics and Computer Sciences, \\
    University of Cagliari\\
    via Ospedale 72, 09123 Cagliari, Italy}\\
    \vspace{2pt}
\end{center}
\begin{center}    
    \small \today
\end{center}

\vspace{2pt}
\newenvironment{summary}
{\vspace{.5\baselineskip}\begin{list}{}{%
     \setlength{\baselineskip}{0.85\baselineskip}
     \setlength{\topsep}{0pt}
     \setlength{\leftmargin}{12mm}
     \setlength{\rightmargin}{12mm}
     \setlength{\listparindent}{0mm}
     \setlength{\itemindent}{\listparindent}
     \setlength{\parsep}{0pt}
     \item\relax}}{\end{list}\vspace{.5\baselineskip}}
\begin{summary}
{\footnotesize {\bf Abstract.} 
 This paper is concerned with 
the attraction-repulsion 
chemotaxis system with superlinear logistic degradation,
%
\begin{align*}
\begin{cases}
u_t = \Delta u  - \chi \nabla\cdot(u \nabla v) 
        + \xi \nabla\cdot (u \nabla w) + \lambda u - \mu u^k,  \quad 
&x \in \Omega,\ t>0,\\[1.05mm]
0= \Delta v + \alpha u - \beta v, \quad 
&x \in \Omega,\ t>0,\\[1.05mm]
0= \Delta w + \gamma u - \delta w, \quad 
&x \in \Omega,\ t>0,
\end{cases}
\end{align*}
%
under homogeneous Neumann boundary conditions, 
in a ball $\Omega \subset \R^n$ ($n \ge 3$), 
with constant parameters $\lambda \in \R$, $k>1$,  
$\mu, \chi, \xi, \alpha, \beta, \gamma, \delta>0$. 
Blow-up phenomena in the system have been well investigated 
in the case $\lambda=\mu=0$, 
whereas the attraction-repulsion chemotaxis system with logistic degradation 
has been not studied. 
Under the condition that $k>1$ is close to $1$, 
this paper ensures a solution 
which blows up 
in $L^\infty$-norm and $L^\sigma$-norm 
with some $\sigma>1$  
for some nonnegative initial data. 
Moreover, a lower bound of blow-up time is derived. 
}
\end{summary}

\vspace{10pt}

\newpage

\section{Introduction} \label{Sec1}

{\it Chemotaxis} is a property of cells to move in 
response to the concentration gradient of a chemical substance 
produced by the cells.  
More precisely, it accounts for a process in which cells exhibit in response 
to chemoattractant and chemorepellent which are produced by themselves, 
that is, moving towards higher concentrations of an attractive signal 
and keeping away from a repulsive signal. 
A fully parabolic attraction-repulsion chemotaxis system was proposed by 
Painter and Hillen~\cite{PH-2002} to show the quorum effect 
in the chemotactic process and 
Luca et al.~\cite{LCEM-2003}
to describe the aggregation of microglia observed in Alzheimer's disease, 
and can be approximated by a parabolic--elliptic--elliptic system. 
\vspace{2mm}

In this paper we consider the parabolic--elliptic--elliptic 
attraction-repulsion chemotaxis system with superlinear logistic degradation,
%
\begin{align}\label{sys1}
\begin{cases}
u_t = \Delta u  - \chi \nabla\cdot(u \nabla v) 
        + \xi \nabla\cdot (u \nabla w) + \lambda u - \mu u^k,  \quad 
&x \in \Omega,\ t>0,\\[1.05mm]
0= \Delta v + \alpha u - \beta v, \quad 
&x \in \Omega,\ t>0,\\[1.05mm]
0= \Delta w + \gamma u - \delta w, \quad 
&x \in \Omega,\ t>0,\\[1.05mm]
\frac{\pa u}{\pa \nu}=\frac{\pa v}{\pa \nu}=\frac{\pa w}{\pa \nu}=0,
&x \in \pa\Omega,\ t>0,\\[1.05mm]
u(x,0)=u_0(x),
&x \in \Omega,
\end{cases}
\end{align}
%
where $\Omega:=B_R(0) \subset \R^n$ $(n \ge 3)$ is an open ball 
centered at the origin with radius $R>0$; 
$\lambda \in \R$, $k>1$ and 
$\mu, \chi, \xi, \alpha, \beta, \gamma, \delta$ are positive constants; 
$\frac{\pa}{\pa \nu}$ is the outward normal derivative on $\pa \Omega$. 
Moreover, the initial data $u_0$ is supposed to satisfy
%
\begin{align}\label{initial}
u_0 \in C^0(\cl{\Omega})
\text{ is radially symmetric and nonnegative.}
\end{align}
%

%
\noindent
The functions $u$, $v$ and $w$ represent the cell density, the concentration of attractive 
and repulsive chemical substances, respectively. 
\vspace{2mm}

Blow-up phenomena correspond to the concentration of organisms 
on chemical substances. 
Hence it is important to investigate whether 
a solution of system \eqref{sys1} blows up or not. 
In this paper we show finite-time blow-up in $L^\infty$-norm and 
$L^\sigma$-norm with some $\sigma>1$, 
and derive a lower bound of blow-up time.
Still more, not only blow-up phenomena but also global existence and 
boundedness have been studied in many literatures 
on chemotaxis systems (see \cite{HP-2009}, \cite{BBTW-2015} and \cite{AT}). 
Before presenting the main results, we give an overview of 
known results about some problems related to \eqref{sys1}.
\vspace{2mm}

We first focus on the chemotaxis system
%
\begin{align}\label{pplog}
\begin{cases}
      u_t=\Delta u-\chi\nabla \cdot (u\nabla v)+g(u),
    \\[1.05mm]
    \tau  v_t=\Delta v+\alpha u-\beta v
\end{cases}
\end{align}
%
under homogeneous Neumann boundary conditions, where  
$\chi, \alpha, \beta$ are positive constants and $g$ is a function of logistic type, 
$\tau\in\{0,1\}$.
The system with $g(u) \equiv 0$ was proposed 
by Keller and Segel~\cite{KS-1970}. 
Since then, system \eqref{pplog} 
was extensively investigated as listed below.
\vspace{3mm}
\begin{itemize}
\item 
If $\tau=1$, $g(u) \equiv 0$ and $\alpha=\beta=1$, global existence and boundedness as well as finite-time blow-up were investigated as follows. 
In the one-dimensional setting, Osaki and Yagi~\cite{OY-2001} showed that 
all solutions are global in time and bounded. 
In the two-dimensional setting, 
Nagai et al.~\cite{NSY-1997} established 
global existence and boundedness under the condition $\int_\Omega u_0(x)\,dx<\frac{4\pi}{\chi}$. 
On the other hand, Herrero and Vel\'{a}zquez~\cite{HV-1997} presented 
existence of radially symmetric solutions which blow up in finite time.  
Winkler in \cite{W-2010} with $\chi=1$ and $n\geq 3$,  derived that if
$\|u_0\|_{L^{\frac{n}{2}+\ep}(\Omega)}$ and $\|\nabla v_0\|_{L^{n+\ep}(\Omega)}$ 
are small for sufficiently small $\ep>0$, 
then a solution is global and bounded. 
Also, Winkler in \cite{W-2013} proved finite-time blow-up under some conditions for initial data $(u_0, v_0)$. 
\vspace{4mm}
\item 
If $\tau=1$ and $g(u)=\lambda u-\mu u^k$ with $\lambda, \mu>0$, 
global existence for any $k>1$ and 
stabilization for $k \ge 2-\frac{2}{n}$ 
were achieved in a generalized solution concept by Winkler~\cite{W-2020-4}. 
Also, for certain choices of $\lambda, \mu$, Yan and Fuest in \cite{YF}, derived global existence of weak solutions 
under the condition $k>\min\{2-\frac{2}{n},\ 2-\frac{4}{n+4}\},$ $n\geq 2$ 
and $\alpha=\beta=1$. 
In particular  for $n=2$, they showed that taking any $k>1$ suffices to exclude the possibility of collapse into a persistent Dirac distribution.
\vspace{4mm}
\item
If $\tau=0$, $g(u) \equiv 0$ and $\beta=1$, Nagai in \cite{N-1995} 
proved global existence and boundedness when $n=1$, or $n=2$ and
$\int_\Omega u_0(x)\,dx<\frac{4\pi}{\chi\alpha}$, 
and finite-time blow-up under some condition for the energy function and the moment of $u$ when $n \ge 2$. 
Also, in the two-dimensional setting, 
Nagai in \cite{N-2001} obtained 
global existence and boundedness 
under the condition $\int_\Omega u_0(x)\,dx<\frac{8\pi}{\chi\alpha}$, 
and finite-time blow-up 
under the conditions that
$\alpha=1$, $\int_\Omega u_0(x)\,dx>\frac{8\pi}{\chi}$ and that 
\begin{align}\label{intsmall}
\int_\Omega u_0(x)|x-x_0|^2\,dx\ 
\text{is sufficiently small for some $x_0 \in \Omega$.}
\end{align}
\item
If $\tau=0$, $g(u) \le a-\mu u^2$ with $a>0$, 
$\mu>0$ ($n \le 2$), $\mu>\frac{n-2}{n}\chi$ ($n \ge 3$) 
and $\alpha=\beta=1$, 
Tello and Winkler in \cite{TW-2007} showed global existence and 
boundedness.
\vspace{4mm}
 \item
If $\tau=0$ and $\chi=\alpha = \beta=1$, when $g(u)=\lambda u-\mu u^k$ with $\lambda \in \R$, $\mu>0$ and $k>1$, 
Winkler in \cite{W-2018} established finite-time blow-up 
in $L^\infty$-norm under suitable conditions on data; 
more precisely, the author asserted that 
if $\Omega=B_R(0) \subset \R^n$ with $n \ge 3$, $R>0$ 
and $1<k<\frac{7}{6}$ $(n \in \{3,4\})$, 
$1<k<1+\frac{1}{2(n-1)}$ $(n \ge 5)$, 
then system \eqref{pplog} admits 
a solution which blows up in 
$L^\infty$-norm at finite time. 
In~\cite{MV-2020}, Marras and Vernier derived finite-time blow-up in $L^\sigma$-norm with $\sigma>\frac{n}{2}$ 
and finally obtained a lower bound of blow-up time. 
Moreover, as to system \eqref{pplog} 
with nonlinear diffusion, 
finite-time blow-up in $L^\infty$-norm 
was obtained by Black et al.\ in \cite{BFL} 
(see also \cite{T}, \cite{TY-2020} 
for weak chemotactic sensitivity and \cite{MNV-2020}  for finite-time blow-up
 in $L^p$-norm to more general chemotaxis system). 
\end{itemize}
\vspace{4mm}

We now shift our attention to the attraction-repulsion chemotaxis system 
%
\begin{align}\label{pee}
\begin{cases}
      u_t=\Delta u-\chi\nabla \cdot (u\nabla v)
            +\xi \nabla\cdot(u \nabla w) + g(u),
    \\[1.05mm]
    \tau v_t  =\Delta v+\alpha u-\beta v,
    \\[1.05mm]
      \tau w_t=\Delta w+\gamma u-\delta w
\end{cases}
\end{align}
%
under homogeneous Neumann boundary conditions, 
where $\chi, \xi, \alpha, \beta, \gamma, \delta>0$ are constants
and $\tau\in\{0,1\}$. 
The system with $\tau=0$ and 
$g(u)=\lambda u-\mu u^k$ 
coincides with \eqref{sys1}, 
whereas the previous works on 
this system are collected as follows.
\vspace{4mm}
\begin{itemize}
\item
If $\tau=0$ and $g(u)\equiv 0$, existence of solutions which blow up in $L^\infty$-norm at finite time 
was studied in~\cite{TW-2013} and \cite{LL-2016}. 
More precisely, in the two-dimensional setting, 
Tao and Wang~\cite{TW-2013} derived finite-time blow-up 
under the conditions \eqref{intsmall} and 
%
\begin{itemize}
\item[] \hspace{-8mm} (i)
     $\chi\alpha-\xi\gamma>0$, \quad $\delta=\beta$\quad and 
     \quad $\int_\Omega u_0(x)\,dx>\frac{8\pi}{\chi\alpha-\xi\gamma}$.
\end{itemize}
%
Also, in the two-dimensional setting, Li and Li~\cite{LL-2016} 
extended the above (i) to the following two conditions:
%
\begin{itemize}
\item[] \hspace{-8mm} (ii)
     $\chi\alpha-\xi\gamma>0$, \quad $\delta \ge \beta$\quad and 
     \quad $\int_\Omega u_0(x)\,dx>\frac{8\pi}{\chi\alpha-\xi\gamma}$;
\item[] \hspace{-8mm} (iii)
     $\chi\alpha\delta-\xi\gamma\beta>0$, \quad $\delta<\beta$ \quad 
     and \quad 
     $\int_\Omega u_0(x)\,dx>\frac{8\pi}{\chi\alpha\delta-\xi\gamma\beta}$.
\end{itemize}
%
\vspace{4mm}
\item
If $\tau=0$ and $g(u) \equiv 0$, Yu et al.~\cite{YGZ-2017} 
replaced $\chi\alpha\delta-\xi\gamma\beta$ with $\chi\alpha-\xi\gamma$ in (iii) 
and filled the gap between the above (ii) and (iii). 
In~\cite{TW-2013}, \cite{LL-2016} and \cite{YGZ-2017}, 
blow-up phenomena were analyzed 
by introducing the linear combination of the solution components $v, w$ 
such that $z:=\chi v-\xi w$ (as to the fully parabolic case $\tau=1$, 
see~\cite{CY-2021}). 
\vspace{4mm}
\item 
If  $\tau=0$ and $g(u)\equiv 0$, explicit lower bound of blow-up time for system \eqref{pee} 
was provided under the condition $\chi\alpha-\xi\gamma>0$ in the two-dimensional setting (see \cite{V-2019}). 
\end{itemize}
\vspace{4mm}

%
In summary, 
blow-up phenomena have been well studied in both a parabolic--elliptic 
Keller--Segel system and an attraction-repulsion one 
when logistic sources are missing.  
However, 
blow-up with effect of logistic degradation in a 
Keller--Segel system has been investigated, 
while for an attraction-repulsion system 
it is still an open problem. 
\vspace{2mm}

The purpose of this paper is to solve the above open problem. 
Namely, we examine finite-time blow-up 
in the attraction-repulsion system \eqref{sys1}  
and we achieve a lower bound of the blow-up time. 
%
\vspace{2mm}

We now state main theorems. 
The first one asserts finite-time blow-up in $L^\infty$-norm. 
The statement reads as follows. 

%
%
\begin{thm}[Finite-time blow-up in $L^\infty$-norm]\label{BULinfty}
Let\/ $\Omega=B_R(0) \subset \R^n$, $n \ge 3$ and 
$R>0$, and let $\lambda \in \mathbb{R}$, $\mu>0$, 
$\chi, \xi, \alpha, \beta, \gamma, \delta>0$. 
Assume that $k>1$ satisfies
\begin{align}\label{condik}
k<
\begin{cases}
\frac{7}{6} & {\it if}\ n \in \{3,4\},\\
1+\frac{1}{2(n-1)} & {\it if}\ n \ge 5, 
\end{cases}
\end{align}
and $\chi, \xi, \alpha, \gamma>0$ fulfill $\chi\alpha-\xi\gamma>0$. 
Then, for all $L>0$, $m>0$ and $m_0 \in (0, m)$ one can find 
$r_0=r_0(R, \lambda, \mu, k, L, m, m_0) \in (0, R)$ 
with the property that whenever $u_0$ satisfies
\eqref{initial} and is such that
\begin{align}\label{u0sing}
u_0(x) \le L|x|^{-n(n-1)}\quad {\it for\ all}\ x \in \Omega
\end{align}
as well as
\begin{align*}
\int_\Omega u_0(x)\,dx \le m\quad {\it but}\quad \int_{B_{r_0}(0)}u_0(x)\,dx \ge m_0,
\end{align*}
there exist $\Tmax \in (0,\infty)$ and a classical solution $(u, v ,w)$ of 
system \eqref{sys1}, uniquely determined by 
\begin{align*}
    &u\in C^0(\overline{\Omega}\times[0,\Tmax))\cap C^{2,1}(\overline{\Omega}\times(0,\Tmax)),\\[1mm]
    &v, w\in \bigcap_{\vartheta>n}C^0([0,\Tmax);W^{1,\vartheta}(\Omega))\cap C^{2,1}(\overline{\Omega}\times(0,\Tmax)),
\end{align*}
which blows up at $t=\Tmax$ in the sense that 
\begin{align}\label{blowupinfty}
\limsup_{t \nearrow \Tmax} \|u(\cdot, t)\|_{L^\infty(\Omega)}=\infty.
\end{align}
\end{thm}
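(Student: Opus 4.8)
The plan is to adapt the mass-accumulation (moment) method that Winkler developed in \cite{W-2018} for the parabolic--elliptic Keller--Segel system with logistic source, the principal new ingredients being the reduction of the attraction-repulsion coupling to a single effective drift potential and the control of the repulsive contribution. First I would invoke a local existence and uniqueness result (which I assume is established as a preliminary lemma) yielding a unique classical radial solution on a maximal interval $[0,\Tmax)$ together with the extensibility criterion that if $\Tmax<\infty$ then $\limsup_{t\nearrow\Tmax}\|u(\cdot,t)\|_{L^\infty(\Omega)}=\infty$; it therefore suffices to prove $\Tmax<\infty$. Since $v$ and $w$ are determined by $u$ through the two elliptic problems, I would set $z:=\chi v-\xi w$, so that the first equation reads $u_t=\Delta u-\nabla\cdot(u\nabla z)+\lambda u-\mu u^k$, and compute
\[
-u\,\Delta z=(\chi\alpha-\xi\gamma)u^2-u(\chi\beta v-\xi\delta w).
\]
The hypothesis $\chi\alpha-\xi\gamma>0$ makes the leading term a genuinely destabilizing quadratic self-amplification, exactly as in the single-signal case.

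Next I would pass to radial variables and introduce the mass accumulation function
\[
\phi(s,t):=\int_{B_{s^{1/n}}(0)}u(x,t)\,dx,\qquad s\in[0,R^n],
\]
and derive the governing equation for $\phi$; under the substitution $s=r^n$ the quadratic term above produces a superlinear forcing in this equation. Using the pointwise hypothesis \eqref{u0sing} I would propagate, by comparison on a short time interval, a bound of the form $u(x,t)\le C|x|^{-n(n-1)}$, which is precisely what is needed to dominate the remainder $\chi\beta v-\xi\delta w$ (this does \emph{not} cancel, because $\beta\ne\delta$) in terms of the total mass $m$ and the weight. I would then test the $\phi$-equation against a weight of the form $s^{-a}(s_0-s)_+$ for a suitable $a\in(0,1)$ and a small $s_0>0$, obtaining a functional
\[
F(t):=\int_0^{s_0}s^{-a}(s_0-s)\,\phi(s,t)\,ds
\]
together with an autonomous differential inequality $F'(t)\ge c_1\,F(t)^{1+\kappa}-c_2$ for some $\kappa>0$.

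The concentration assumption $\int_{B_{r_0}(0)}u_0\ge m_0$ guarantees that $F(0)$ exceeds the threshold beyond which this inequality forces $F$ to blow up in finite time, while the mass bound $\int_\Omega u_0\le m$ and the pointwise tail bound keep the competing contributions (in particular the nonlocal $v,w$ terms) under control; choosing $r_0$ small in dependence on $R,\lambda,\mu,k,L,m,m_0$ reconciles these requirements. Since a finite blow-up time for $F$ is incompatible with global boundedness of $\|u(\cdot,t)\|_{L^\infty(\Omega)}$, we conclude $\Tmax<\infty$ and hence \eqref{blowupinfty}.

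The hard part will be closing the differential inequality, and this is where the restriction \eqref{condik} on $k$ enters. The logistic degradation contributes a negative term comparable to $\mu\int_0^{s_0}s^{-a}(s_0-s)\bigl(\text{a quantity controlled by }u^{k}\bigr)\,ds$, which competes against the quadratic self-amplification; bounding it by a power of $F$ strictly below $1+\kappa$ requires $k$ close enough to $1$, and carefully matching the powers (via the pointwise estimate together with a weighted Hölder/interpolation bound) is exactly what yields the thresholds $k<\tfrac{7}{6}$ for $n\in\{3,4\}$ and $k<1+\tfrac{1}{2(n-1)}$ for $n\ge5$. A secondary obstacle is that the sign condition $\chi\alpha-\xi\gamma>0$ alone does not neutralize the repulsive remainder $\chi\beta v-\xi\delta w$; this must be absorbed through the propagated pointwise bound, so that the effective dynamics for $\phi$ reduce to the form analysed in \cite{W-2018}.
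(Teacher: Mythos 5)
Your proposal is correct in substance and follows the same overall route as the paper: both adapt Winkler's moment method from \cite{W-2018} --- mass-accumulation variable, weight $s^{-p}(s_0-s)$, a differential inequality of the form \eqref{DI}/\eqref{DI2}, initial concentration in $B_{r_0}(0)$ to push the functional past the blow-up threshold, and the extensibility criterion \eqref{bucri} to convert $\Tmax<\infty$ into \eqref{blowupinfty}; your deferral of the power-matching that produces \eqref{condik} is also at the same level of detail as the paper, which cites \cite[(4.3)]{W-2018} for those computations. The one genuine divergence is the treatment of the repulsion, which the paper identifies as the main difficulty. You route the computation through $z=\chi v-\xi w$ and plan to \emph{dominate} the non-cancelling remainder $u(\chi\beta v-\xi\delta w)$; the paper deliberately avoids the $z$-substitution (noting it fails to reduce the system when $\beta\neq\delta$), keeps $U,V,W$ separate in \eqref{UVWeq}, and then observes that the repulsive contribution $n\xi\delta\int_0^{s_0}s^{-p}(s_0-s)W U_s\,ds$ is \emph{nonnegative} (since $w\ge 0$ and $U_s=\frac1n u\ge 0$), so it can simply be discarded in the lower bound for $\Phi'$, leaving only the attractive remainder $-n\chi\beta\int_0^{s_0}s^{-p}(s_0-s)V U_s\,ds$ to estimate, exactly as in the single-signal case. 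Your domination plan is workable --- the same radial potential estimates that control $V$ give $W\lesssim m\,s^{2/n}$, so the $w$-term would only contribute at the order of the additive constant $C_1 s_0^{\frac{2}{n}+1-p}$ in \eqref{DI} --- but it is strictly more work and discards the favorable sign that makes the paper's argument clean. One attribution in your sketch should be corrected: the propagated singularity bound $u(x,t)\le C|x|^{-n(n-1)}$ is not what controls $\chi\beta v-\xi\delta w$ (that is handled via the mass bound together with Green's-function-type estimates for the radial elliptic subproblems); its actual role, as you correctly state later in the sketch, is to control the logistic degradation term $n^{k-1}\mu\int_0^s U_s^k\,d\sigma$, which is precisely where the restriction \eqref{condik} on $k$ enters.
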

%

We next state a result which guarantees a solution blows up in $L^\sigma$-norm at the blow-up time in $L^\infty$-norm.  
The theorem is the following.
%
%
\begin{thm}[Finite-time blow-up in $L^\sigma$-norm]\label{BULsig}
Let\/ $\Omega = B_R(0) \subset\mathbb{R}^n$, $n\geq 3$ and $R>0$. 
Then, a classical solution $(u, v, w)$ for 
$t \in (0, \Tmax)$,
provided by Theorem~\ref{BULinfty}, is such that for all 
$\sigma>\frac{n}{2}$, 
\begin{align*}
\limsup_{t \nearrow T_{\rm max}}\, 
\left\|u(\cdot,t)\right\|_{L^{\sigma}(\Omega)} 
= \infty.
\end{align*}
\end{thm}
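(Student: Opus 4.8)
\noindent\emph{Plan.} The aim is to upgrade the $L^\infty$ blow-up provided by Theorem~\ref{BULinfty} to blow-up in $L^\sigma$ for every $\sigma>\frac n2$, by arguing that the opposite is incompatible with \eqref{blowupinfty}. Fix $\sigma>\frac n2$ and suppose, contrary to the assertion, that $\limsup_{t\nearrow\Tmax}\|u(\cdot,t)\|_{L^\sigma(\Omega)}<\infty$. Since $u\in C^0(\cl{\Omega}\times[0,\Tmax))$ is bounded on every strip $\cl{\Omega}\times[0,t_1]$ with $t_1<\Tmax$, this is equivalent to
\begin{align*}
M:=\sup_{t\in(0,\Tmax)}\|u(\cdot,t)\|_{L^\sigma(\Omega)}<\infty .
\end{align*}
I would then show that such an $L^\sigma$ bound forces $\sup_{t\in(0,\Tmax)}\|u(\cdot,t)\|_{L^\infty(\Omega)}<\infty$, contradicting \eqref{blowupinfty} and thereby proving the theorem.

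\noindent\emph{Tools.} The main device is the variation-of-constants representation of the first equation of \eqref{sys1} with respect to the Neumann heat semigroup $(e^{t\Delta})_{t\ge0}$,
\begin{align*}
u(\cdot,t)={}&e^{(t-t_0)\Delta}u(\cdot,t_0)
-\chi\int_{t_0}^{t}e^{(t-s)\Delta}\nabla\cdot\big(u\nabla v\big)(\cdot,s)\,ds\\
&+\xi\int_{t_0}^{t}e^{(t-s)\Delta}\nabla\cdot\big(u\nabla w\big)(\cdot,s)\,ds
+\int_{t_0}^{t}e^{(t-s)\Delta}\big(\lambda u-\mu u^k\big)(\cdot,s)\,ds,
\end{align*}
together with the standard $L^p$--$L^q$ smoothing estimates $\|e^{t\Delta}\varphi\|_{L^q(\Omega)}\le C(1+t^{-\frac n2(\frac1p-\frac1q)})\|\varphi\|_{L^p(\Omega)}$ and $\|e^{t\Delta}\nabla\cdot\varphi\|_{L^q(\Omega)}\le C(1+t^{-\frac12-\frac n2(\frac1p-\frac1q)})\|\varphi\|_{L^p(\Omega)}$, valid for $1\le p\le q\le\infty$. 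The two chemotactic drift terms are treated identically; to close them I convert the gradients of the signals into norms of $u$ via elliptic regularity applied to the second and third equations of \eqref{sys1}: since $-\Delta v+\beta v=\alpha u$ and $-\Delta w+\delta w=\gamma u$ under homogeneous Neumann conditions on the smooth domain $\Omega$, one has $\|v(\cdot,t)\|_{W^{2,p}(\Omega)}\le C\|u(\cdot,t)\|_{L^p(\Omega)}$, and likewise for $w$, whence Sobolev embedding gives $\|\nabla v(\cdot,t)\|_{L^{q}(\Omega)}+\|\nabla w(\cdot,t)\|_{L^{q}(\Omega)}\le C\|u(\cdot,t)\|_{L^p(\Omega)}$ whenever $\frac1q\ge\frac1p-\frac1n$. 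The zeroth-order source causes no trouble because of its favorable sign: as $k>1$ and $\mu>0$ one has $\lambda s-\mu s^k\le C_0:=\max_{s\ge0}(\lambda s-\mu s^k)<\infty$, so that, by positivity of $e^{t\Delta}$ and of $u$, $\int_{t_0}^{t}e^{(t-s)\Delta}(\lambda u-\mu u^k)(\cdot,s)\,ds\le C_0(t-t_0)\le C_0\Tmax$ pointwise; thus the logistic term contributes only a bounded quantity.

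\noindent\emph{Bootstrap.} With these ingredients I run a finite iteration. Starting from boundedness in $L^{\sigma}$ with $\sigma>\frac n2$, elliptic regularity makes $\nabla v,\nabla w$ bounded in $L^{q_1}(\Omega)$ with $q_1=\frac{n\sigma}{n-\sigma}>n$ (and $\nabla v,\nabla w\in L^\infty(\Omega)$ as soon as the current exponent exceeds $n$), so $u\nabla v$ and $u\nabla w$ are bounded in $L^{r_1}(\Omega)$ with $\frac1{r_1}=\frac1\sigma+\frac1{q_1}$. Inserting this into the gradient smoothing estimate and choosing a target exponent $q_2$ with $\frac1{q_2}>\frac1{r_1}-\frac1n$ keeps the time singularity $(t-s)^{-\frac12-\frac n2(\frac1{r_1}-\frac1{q_2})}$ integrable and yields a uniform bound for $\|u(\cdot,t)\|_{L^{q_2}(\Omega)}$ on $(0,\Tmax)$. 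The admissible gain in the integrability exponent at the first stage equals $\frac1n-\frac1{q_1}=\frac{2\sigma-n}{n\sigma}$, which is strictly positive \emph{precisely} because $\sigma>\frac n2$; since the corresponding gain only increases as the exponent grows, finitely many such steps reach $L^\infty$, producing $\sup_{t\in(0,\Tmax)}\|u(\cdot,t)\|_{L^\infty(\Omega)}<\infty$ and the desired contradiction.

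\noindent\emph{Main obstacle.} The delicate point is the bookkeeping ensuring that the iteration gains strictly at every stage and terminates after finitely many steps: one must check that the condition $\sigma>\frac n2$ propagates along the iteration, that the chosen target exponents always satisfy both the integrability requirement $\frac12+\frac n2(\frac1{r_j}-\frac1{q_{j+1}})<1$ and $q_{j+1}>q_j$, and that the per-step gain stays bounded below so that $L^\infty$ is attained in finitely many rather than only in the limit. A secondary technical issue is the uniform-in-$t$ control of constants; this is handled by fixing a reference time $t_0\in(0,\Tmax)$, using $u(\cdot,t_0)\in L^\infty(\Omega)$, and noting that all semigroup constants depend only on $\Omega$, $n$ and the exponents, not on $t$.
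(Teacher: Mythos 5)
Your proposal is correct, and it shares the paper's outer skeleton exactly: argue by contradiction from a hypothetical uniform $L^{\sigma}$ bound, write $u$ via the Duhamel formula split into the same four pieces, bound the drift terms through semigroup smoothing plus elliptic regularity for $v$ and $w$, and dispose of the logistic term by $\lambda s-\mu s^k\le\max_{s\ge0}(\lambda s-\mu s^k)$ together with positivity of the semigroup. The genuine difference is the mechanism that closes the estimate. The paper (Lemma~\ref{LemmaBoundedness u nabla v}) never iterates: it reduces to $\frac n2<\sigma_0<n$, bounds $\|\nabla v\|_{L^{n\sigma_0/(n-\sigma_0)}}$ once and for all, interpolates $\|u\|_{L^{q\theta}}\le m_*^{1-\bar\theta}\|u\|_{L^\infty}^{\bar\theta}$ against the uniform mass bound of Lemma~\ref{lemma int u < bar m}, and closes via the sublinear self-map inequality $\sup_{[0,T]}\|u\|_{L^\infty}\le C_{13}+C_{14}\bigl(\sup_{[0,T]}\|u\|_{L^\infty}\bigr)^{\bar\theta}$ with $\bar\theta\in(0,1)$. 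You instead run a classical finite Moser-type bootstrap in which each stage's bound depends only on the previous stage, with per-step gain $\frac1{p_j}-\frac1{p_{j+1}}$ bounded below by (a fixed fraction of) $\frac{2\sigma-n}{n\sigma}>0$, which is exactly where $\sigma>\frac n2$ enters in both proofs. What each buys: the paper's one-shot argument is shorter and avoids all iteration bookkeeping, but it leans on the $L^1$ bound and on the self-referential absorption trick; your bootstrap is longer but fully explicit (no fixed-point step, no mass lemma needed), and your gain computation correctly identifies why the iteration terminates in finitely many steps. Two minor points to repair in a write-up: the gradient smoothing estimate (the paper's Lemma~\ref{Cao}(ii)) requires the source exponent to be strictly greater than $1$, whereas you state it for $1\le p$ --- harmless here since every $r_j>1$ when $n\ge3$, but it should be stated correctly; and fixing a single reference time $t_0\in(0,\Tmax)$ makes the singular-in-time integrals uniformly controllable only because $\Tmax<\infty$ for the solution of Theorem~\ref{BULinfty} (your bound $C_0\Tmax$ already uses this); for a statement valid for arbitrary classical solutions one should use the sliding window $t_0=\max\{0,t-1\}$ together with the factor $e^{-\mu_1(t-s)}$, as the paper does.
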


Define for all  $\sigma>1$ the energy function
\begin{equation}\label{Psi}
\Psi(t):= \frac 1 {\sigma} \|u(\cdot,t)\|^{\sigma}_{L^{\sigma}(\Omega)} \quad {\rm with}\quad  \Psi_0 := \Psi(0)= \frac 1 {\sigma} \|u_0\|^{\sigma}_{L^{\sigma}(\Omega)}.
\end{equation}
%

The third theorem provides a lower bound of blow-up time. 
The result reads as follows.
%
%
\begin{thm}[Lower bound of blow-up time]\label{LB}
Let\/ $\Omega = B_R(0) \subset\mathbb{R}^n$, $n\geq 3$ 
and $R>0$. 
Then,
for all  $\sigma>\frac n2$ and some $B_1\ge0$, $B_2, B_3>0$
depending on $\lambda$, $\mu$, $\sigma$, and $n$, 
the blow-up time $\Tmax$, provided by Theorem~\ref{BULinfty}, 
satisfies the estimate
\begin{align}\label{lower Tmax in Lp}
\Tmax\geq \int_{\Psi_0}^{\infty}\frac{d\eta}{B_1 \eta + B_2\eta^{\gamma_1} + B_3 \eta^{\gamma_2}},
  \end{align}
with $\gamma_1:= \frac{\sigma+1}{\sigma}$, 
$\gamma_2:=\frac{2(\sigma+1)-n} {2\sigma-n}.$
\end{thm}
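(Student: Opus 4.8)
The plan is to derive a pointwise-in-time differential inequality for the energy $\Psi$ defined in \eqref{Psi} and then integrate it by separation of variables, using Theorem~\ref{BULsig} to pin down the upper endpoint of the resulting integral. First I would differentiate $\Psi$ and insert the first equation of \eqref{sys1}, giving $\Psi'(t)=\int_\Omega u^{\sigma-1}u_t=\int_\Omega u^{\sigma-1}\bigl[\Delta u-\chi\nabla\cdot(u\nabla v)+\xi\nabla\cdot(u\nabla w)+\lambda u-\mu u^k\bigr]$. Integration by parts converts the diffusion term into the dissipative quantity $-\frac{4(\sigma-1)}{\sigma^2}\int_\Omega|\nabla u^{\sigma/2}|^2$, and the two taxis terms combine into $\frac{\sigma-1}{\sigma}\int_\Omega\nabla u^\sigma\cdot\nabla(\chi v-\xi w)$. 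Integrating by parts once more and substituting $\Delta v=\beta v-\alpha u$, $\Delta w=\delta w-\gamma u$ from the elliptic equations produces the cross terms $-\frac{(\sigma-1)\chi\beta}{\sigma}\int_\Omega u^\sigma v$ and $+\frac{(\sigma-1)\xi\delta}{\sigma}\int_\Omega u^\sigma w$ together with the critical production term $+\frac{(\sigma-1)(\chi\alpha-\xi\gamma)}{\sigma}\int_\Omega u^{\sigma+1}$.

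Next comes the sign analysis. The elliptic maximum principle forces $v,w\ge0$, so the attractive cross term $-\frac{(\sigma-1)\chi\beta}{\sigma}\int_\Omega u^\sigma v$ is nonpositive and may be discarded, as may the logistic dissipation $-\mu\int_\Omega u^{\sigma+k-1}$ (retained only if one wishes to sharpen the constants). The repulsive cross term $\int_\Omega u^\sigma w$ is the single genuinely unfavourable contribution; I would control it by testing the $w$-equation against a power of $w$ to obtain the elliptic estimate $\|w\|_{L^{\sigma+1}(\Omega)}\le\frac{\gamma}{\delta}\|u\|_{L^{\sigma+1}(\Omega)}$, whence Hölder's inequality yields $\int_\Omega u^\sigma w\le\frac{\gamma}{\delta}\int_\Omega u^{\sigma+1}$. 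Merging this with the production term collapses every $u^{\sigma+1}$-contribution into $\frac{(\sigma-1)\chi\alpha}{\sigma}\int_\Omega u^{\sigma+1}$, leaving $\Psi'(t)\le-\frac{4(\sigma-1)}{\sigma^2}\int_\Omega|\nabla u^{\sigma/2}|^2+\frac{(\sigma-1)\chi\alpha}{\sigma}\int_\Omega u^{\sigma+1}+\lambda\sigma\Psi$.

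The heart of the argument is the interpolation converting $\int_\Omega u^{\sigma+1}$ into powers of $\Psi$ plus a remainder absorbable by the dissipation. Writing $\int_\Omega u^{\sigma+1}=\|u^{\sigma/2}\|_{L^{2(\sigma+1)/\sigma}(\Omega)}^{2(\sigma+1)/\sigma}$ and applying Gagliardo–Nirenberg to $u^{\sigma/2}$ bounds this by a multiple of $\|\nabla u^{\sigma/2}\|_{L^2}^{n/\sigma}\|u^{\sigma/2}\|_{L^2}^{(2\sigma+2-n)/\sigma}$ plus a lower-order term. Here the hypothesis $\sigma>\frac n2$ is decisive: it makes the gradient exponent $\frac{n}{2\sigma}<1$, so a Young's inequality splits off $\ep\int_\Omega|\nabla u^{\sigma/2}|^2$ — absorbed into the dissipative term for $\ep$ small — at the price of a term proportional to $\Psi^{\gamma_2}$ with $\gamma_2=\frac{2(\sigma+1)-n}{2\sigma-n}$ equal to the Young conjugate exponent, while the remaining Gagliardo–Nirenberg piece contributes $\Psi^{\gamma_1}$ with $\gamma_1=\frac{\sigma+1}{\sigma}$. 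This delivers exactly $\Psi'(t)\le B_1\Psi+B_2\Psi^{\gamma_1}+B_3\Psi^{\gamma_2}$ with $B_1=\sigma\max\{\lambda,0\}\ge0$ and $B_2,B_3>0$.

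Finally I would separate variables in $\frac{\Psi'(t)}{B_1\Psi+B_2\Psi^{\gamma_1}+B_3\Psi^{\gamma_2}}\le1$, integrate over $(0,\Tmax)$, and invoke Theorem~\ref{BULsig} — legitimate since $\sigma>\frac n2$ — to conclude $\Psi(t)\to\infty$ as $t\nearrow\Tmax$, so that the change of variable $\eta=\Psi(t)$ turns the integrated inequality into \eqref{lower Tmax in Lp}; the improper integral converges at $\infty$ precisely because $\gamma_2>1$, which holds again by $\sigma>\frac n2$. The main obstacle is the bookkeeping in the Gagliardo–Nirenberg/Young step, namely producing both exponents $\gamma_1$ and $\gamma_2$ simultaneously and ensuring the gradient remainder is genuinely absorbable by the fixed dissipative coefficient; the other delicate point is the treatment of the repulsion term $\int_\Omega u^\sigma w$ through the elliptic $L^{\sigma+1}$-estimate.
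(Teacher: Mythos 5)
Your proposal is correct and follows essentially the same route as the paper's Section~5: testing with $u^{\sigma-1}$, discarding the favourable $-\chi\beta\frac{\sigma-1}{\sigma}\int_\Omega u^\sigma v$ and logistic terms, controlling the repulsion term via H\"older together with the elliptic estimate $\|w\|_{L^{\sigma+1}(\Omega)}\le\frac{\gamma}{\delta}\|u\|_{L^{\sigma+1}(\Omega)}$, then absorbing $\int_\Omega u^{\sigma+1}\,dx$ by Gagliardo--Nirenberg plus Young (exploiting $\frac{n}{2\sigma}<1$) into the dissipation, and integrating the resulting inequality $\Psi'\le B_1\Psi+B_2\Psi^{\gamma_1}+B_3\Psi^{\gamma_2}$ using Theorem~\ref{BULsig}. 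The only cosmetic difference is that you merge the two taxis terms through $\chi v-\xi w$ before splitting, and fold the repulsion bound directly into the production term, whereas the paper shows $\mathcal{I}_3\le0$ separately --- the resulting coefficient $\chi\alpha\frac{\sigma-1}{\sigma}\int_\Omega u^{\sigma+1}\,dx$ is identical.
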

%

One of the difficulties in the proofs of 
the above theorems is that 
the transformation $z:=\chi v- \xi w$ 
does not work to reduce \eqref{sys1} to 
the Keller--Segel system in the case 
$\beta \neq \delta$, 
in contrast to the case $\beta=\delta$ 
which ensures the simplification of 
\eqref{sys1} as
\begin{align*}
\begin{cases}
u_t = \Delta u  - \nabla\cdot(u \nabla z) + \lambda u - \mu u^k,  \quad &x \in \Omega,\ t>0,\\[1.05mm]
0= \Delta z + (\chi\alpha - \xi \gamma) u -\beta z,  \quad &x \in \Omega,\ t>0,
\end{cases}
\end{align*}
which has already been studied in 
\cite{MV-2020, W-2018}. 
To overcome the difficulty, 
we carry out the arguments in the literatures 
without using the above transformation $z$. 
In particular, we need to handle 
the effect caused by the repulsion term 
$\xi\nabla \cdot (u\nabla w)$. 
\vspace{2mm}

This paper is organized as follows. 
In Section~\ref{Sec2} we give 
preliminary results on local existence 
of classical solutions to \eqref{sys1} and 
some basic and useful facts. 
In Sections~\ref{Sec3} and \ref{Sec4} we prove finite-time blow-up 
in $L^\infty$-norm and $L^\sigma$-norm (Theorems~\ref{BULinfty} 
and \ref{BULsig}), respectively. 
Section~\ref{Sec5} is devoted to
the derivation of
a lower bound of 
blow-up time (Theorem~\ref{LB}). 


\section{Preliminaries} \label{Sec2}

We start with the following lemma 
on local existence of 
classical solutions to \eqref{sys1}. 
This lemma can be proved by a standard fixed point argument 
(see e.g.,~\cite{TW-2007}).
%
\begin{lem} \label{LSE}
Let\/ $\Omega=B_R(0) \subset \R^n$, $n \ge 3$ and 
$R>0$, and let $\lambda \in \mathbb{R}$, $\mu>0$, $k>1$, 
$\chi, \xi, \alpha, \beta, \gamma, \delta>0$.
 Then for all nonnegative $u_0 \in C^0(\overline{\Omega})$ there exists $\Tmax \in (0,\infty]$ such that 
 \eqref{sys1} possesses a unique classical solution 
 $(u, v, w)$ such that
%
\begin{align*}
    &u\in C^0(\overline{\Omega}\times[0,\Tmax))
           \cap C^{2,1}(\overline{\Omega}\times(0,\Tmax)),\\[1.05mm]
    &v, w\in \bigcap_{\vartheta>n}C^0([0,\Tmax);W^{1,\vartheta}(\Omega))
           \cap C^{2,1}(\overline{\Omega}\times(0,\Tmax)),
\end{align*}
%
 and
%
    \begin{align*}
            &u\ge0,
    \quad 
            v \ge 0,
    \quad 
            w \ge 0
    \quad 
            {\it for\ all}\ t \in (0, \Tmax).
    \end{align*}
%
 Moreover, 
%
    \begin{align} \label{bucri}
            {\it if}\ \Tmax<\infty,
    \quad 
            {\it then}\ \limsup_{t \nearrow \Tmax} 
                           \|u(\cdot,t)\|_{L^\infty(\Omega)}=\infty.
    \end{align}
%
\end{lem}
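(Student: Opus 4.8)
The plan is to recast \eqref{sys1} as a single integral equation for $u$ and to solve it by the contraction mapping principle, after which regularity, positivity and the extensibility criterion follow by standard arguments. First I would eliminate $v$ and $w$: since $\beta,\delta>0$, the Neumann realizations of $-\Delta+\beta$ and $-\Delta+\delta$ on $\Omega$ are boundedly invertible, so for each fixed $t$ one may set $v=v[u]:=(-\Delta+\beta)^{-1}(\alpha u)$ and $w=w[u]:=(-\Delta+\delta)^{-1}(\gamma u)$. Elliptic $L^p$-regularity then gives, for every $p>n$, the bound $\|v[u]\|_{W^{2,p}(\Omega)}+\|w[u]\|_{W^{2,p}(\Omega)}\le C\|u\|_{L^p(\Omega)}$, and hence, via the embedding $W^{1,p}\hookrightarrow C^0$, the gradient control $\|\nabla v[u]\|_{L^\infty(\Omega)}+\|\nabla w[u]\|_{L^\infty(\Omega)}\le C\|u\|_{L^p(\Omega)}$. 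The problem thus reduces to a scalar equation for $u$ in which the chemotactic drift is determined by $u$ itself.

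Next I would fix $p>n$, a time horizon $T$, and work in the closed ball of radius $2\|u_0\|_{L^\infty(\Omega)}+1$ inside $C^0(\overline\Omega\times[0,T])$. On this set define the map $\Phi$ by the variation-of-constants formula
\[
\Phi(u)(t)=e^{t\Delta}u_0+\int_0^t e^{(t-s)\Delta}\Big[-\chi\nabla\!\cdot(u\,\nabla v[u])+\xi\nabla\!\cdot(u\,\nabla w[u])+\lambda u-\mu u^k\Big](s)\,ds,
\]
where $(e^{t\Delta})_{t\ge0}$ is the Neumann heat semigroup. The key analytic input is the smoothing estimate $\|e^{\tau\Delta}\nabla\!\cdot z\|_{L^\infty(\Omega)}\le C\,(1+\tau^{-\frac12-\frac{n}{2p}})\,\|z\|_{L^p(\Omega)}$; because $p>n$ the exponent $\frac12+\frac{n}{2p}$ is strictly below $1$, so the time integral converges and carries a prefactor $T^{\frac12-\frac{n}{2p}}$ that vanishes as $T\downarrow0$. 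Combining this with the bilinear bound $\|u\,\nabla v[u]\|_{L^p}\le\|u\|_{L^\infty}\|\nabla v[u]\|_{L^p}\le C\|u\|_{L^\infty}\|u\|_{L^p}$ (and likewise for $w$), together with the local Lipschitz continuity of $s\mapsto s^k$ on bounded intervals (valid since $k>1$), shows that $\Phi$ maps the ball into itself and is a contraction once $T$ is small; Banach's fixed point theorem then yields a unique mild solution $u$ on $[0,T]$, with $v,w$ recovered as $v[u],w[u]$.

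To upgrade to a classical solution I would bootstrap: the fixed point $u$ lies in $C^0$, so $\nabla v,\nabla w\in C^0$ and the right-hand side of the $u$-equation becomes H\"older continuous in space-time after an initial application of parabolic smoothing; parabolic Schauder estimates then give $u\in C^{2,1}(\overline\Omega\times(0,T])$, while elliptic Schauder theory yields the stated $W^{1,\vartheta}$-continuity in time and interior $C^{2,1}$ regularity of $v,w$. Positivity is obtained in the indicated order: rewriting the $u$-equation in non-divergence form by substituting $\Delta v=\beta v-\alpha u$ and $\Delta w=\delta w-\gamma u$, the resulting first-order and zero-order coefficients are bounded on $[0,T]$ and every genuinely nonlinear term vanishes at $u=0$, so the parabolic maximum principle forces $u\ge0$ from $u_0\ge0$; the elliptic maximum principle applied to $v[u],w[u]$ with the nonnegative sources $\alpha u,\gamma u$ then gives $v,w\ge0$.

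Finally I would pass to the maximal existence time: gluing local solutions produces a maximal $\Tmax\in(0,\infty]$, and the decisive observation is that the length $T$ furnished by the contraction argument depends only on $\|u(\cdot,t_0)\|_{L^\infty(\Omega)}$ (through the ball radius and the bounds above) and on the fixed structural constants. Hence if $\Tmax<\infty$ while $\|u(\cdot,t)\|_{L^\infty(\Omega)}$ remained bounded as $t\nearrow\Tmax$, one could restart the construction from a time close to $\Tmax$ and continue the solution past $\Tmax$, contradicting maximality; this establishes \eqref{bucri}. I expect the main obstacle to be the contraction estimate for the chemotactic terms, specifically controlling the differences $\nabla\!\cdot(u_1\nabla v[u_1])-\nabla\!\cdot(u_2\nabla v[u_2])$ in $L^p$ uniformly, which one handles by splitting into $\nabla\!\cdot((u_1-u_2)\nabla v[u_1])$ and $\nabla\!\cdot(u_2\nabla v[u_1-u_2])$ and exploiting the linearity of $u\mapsto v[u]$ together with the elliptic estimate applied to $v[u_1-u_2]$; the superlinear degradation $-\mu u^k$ is comparatively harmless on the bounded set where the fixed point lives.
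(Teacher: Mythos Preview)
Your proposal is correct and follows precisely the ``standard fixed point argument'' that the paper invokes without detail (it merely cites \cite{TW-2007}); you have supplied the expected outline of eliminating $v,w$ via elliptic inverses, running a contraction in $C^0$ using the semigroup smoothing estimate for $e^{\tau\Delta}\nabla\cdot{}$, bootstrapping to classical regularity, applying the maximum principle for nonnegativity, and deducing the extensibility criterion from the fact that the local existence time depends only on $\|u_0\|_{L^\infty(\Omega)}$. There is nothing to add.
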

%
%
\begin{remark}
We can use $\lim_{t \nearrow \Tmax} \|u(\cdot,t)\|_{L^\infty(\Omega)}$ instead of $\limsup_{t \nearrow \Tmax} \|u(\cdot,t)\|_{L^\infty(\Omega)}$ in the blow-up criterion \eqref{bucri}, 
because we can construct a classical solution on $[0,T]$ 
with some positive time $T$ depending only on $\|u_0\|_{L^\infty(\Omega)}$ 
and discuss the extension of the classical solution 
in a neighborhood of its maximal existence time $\Tmax$,
if $\Tmax<\infty$. 
\end{remark}
%

We next give some properties of the Neumann heat semigroup 
which will be used later. 
For the proof, see \cite[Lemma 2.1]{Cao} and \cite[Lemma~1.3]{W-2010}.
\newpage
\begin{lem} \label{Cao} 
Suppose $(e^{t \Delta})_{t\geq 0}$ is the Neumann heat semigroup in 
$\Omega$, and let $\mu_1 >0$ denote the first non zero eigenvalue of 
$-\Delta$ in $\Omega$ under Neumann boundary conditions. 
Then there exist $k_1,  k_2 >0$ which 
only depend on $\Omega$ and have  
the following properties\/{\rm :}
\begin{enumerate}[label=(\roman*)]
\item if\/ $1 \leq q\leq p\leq \infty$, then 
\begin{equation} \label{etDeltaz}
\|e^{t \Delta} z\|_{L^{p}(\Omega)} \leq k_1t^{ - \frac n 2(\frac 1 q - \frac 1 p)} 
\|z\|_{L^q(\Omega)}, \ \ \forall \ t >0
\end{equation}
holds for all $z\in L^q(\Omega)$.
\item If\/ $1< q \leq p \leq \infty$, then
\begin{equation} \label{etDelta nablaz}
\|e^{t \Delta} \nabla \cdot \textbf{z\,}\|_{L^{p}(\Omega)} \leq k_2\big(1+ t^{-\frac 1 2 - \frac n 2(\frac 1 q - \frac 1 p)}\big) e^{-\mu_1 t} \|\textbf{z\,}\|_{L^q(\Omega)}, \ \ \forall \ t >0
\end{equation}
is valid for any $\textbf{z\,} \in (L^{q}(\Omega))^n$, 
where $e^{t \Delta} \nabla \cdot {}$ is the extension of the operator
$e^{t \Delta} \nabla \cdot {}$ on $(C_0^\infty(\Omega))^n$ to $(L^q(\Omega))^n$. 
\end{enumerate}
\end{lem}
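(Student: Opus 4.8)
The plan is to reduce both estimates to classical pointwise bounds on the Neumann heat kernel $G(x,y,t)$ associated with $e^{t\Delta}$ on the ball $\Omega$, supplemented by the spectral gap encoded in $\mu_1$. The two inputs I would invoke are, first, the Gaussian upper bounds that are standard on bounded smooth domains under Neumann conditions,
\[
0\le G(x,y,t)\le C_1\,t^{-\frac n2}\,e^{-\frac{|x-y|^2}{C_2 t}},\qquad
|\nabla_x G(x,y,t)|\le C_1\,t^{-\frac{n+1}2}\,e^{-\frac{|x-y|^2}{C_2 t}},\qquad t\in(0,1],
\]
together with the mass-conservation identity $\int_\Omega G(x,y,t)\,dy=1$ expressing the Neumann condition; and second, the facts that $e^{t\Delta}$ is a contraction on each $L^p(\Omega)$ and decays like $e^{-\mu_1 t}$ on the orthogonal complement of the constants, i.e.\ on functions $z$ with $\int_\Omega z=0$.

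For part (i) I would split into $0<t\le1$ and $t>1$. On $0<t\le1$ the Gaussian bound yields $\sup_x\|G(x,\cdot,t)\|_{L^r(\Omega)}\le C\,t^{-\frac n2(1-\frac1r)}$, and feeding this into the generalized Young (Schur-type) inequality for the integral operator with kernel $G$, with the exponents chosen so that $\frac1r=1-(\frac1q-\frac1p)$, reproduces exactly $\|e^{t\Delta}z\|_{L^p(\Omega)}\le C\,t^{-\frac n2(\frac1q-\frac1p)}\|z\|_{L^q(\Omega)}$. For $t>1$ I would decompose $z$ into its spatial average and its mean-zero part: the average is transported unchanged, while on the complement the spectral decay $e^{-\mu_1 t}$ controls the evolution; combined with the $L^p$-contractivity of the semigroup this gives a bound carrying the factor $1+t^{-\frac n2(\frac1q-\frac1p)}$, which coincides with \eqref{etDeltaz} on the bounded time intervals relevant to the subsequent blow-up analysis.

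Part (ii) follows the same template with two modifications. First, after integration by parts the operator $e^{t\Delta}\nabla\cdot$ is an integral operator whose kernel is $-\nabla_y G(x,y,t)$, so the gradient Gaussian bound replaces $t^{-n/2}$ by $t^{-(n+1)/2}$ and produces the extra factor $t^{-1/2}$, giving $\|e^{t\Delta}\nabla\cdot\mathbf z\|_{L^p(\Omega)}\le C\,t^{-\frac12-\frac n2(\frac1q-\frac1p)}\|\mathbf z\|_{L^q(\Omega)}$ for $0<t\le1$ via the same Young argument. Second, mass conservation forces $\int_\Omega e^{t\Delta}\nabla\cdot\mathbf z=0$, so the range of this operator lies entirely in the mean-zero subspace; writing $e^{t\Delta}\nabla\cdot=e^{(t-1)\Delta}\,(e^{\Delta}\nabla\cdot)$ for $t>1$ and applying $\|e^{(t-1)\Delta}\|\le e^{-\mu_1(t-1)}$ on that subspace produces the genuine exponential factor. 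Assembling the two regimes gives the stated form $k_2\big(1+t^{-\frac12-\frac n2(\frac1q-\frac1p)}\big)e^{-\mu_1 t}$.

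I expect the main obstacle to be precisely the two ingredients I am treating as known: the short-time Gaussian bounds on $G$ and, more delicately, on $\nabla_x G$ under Neumann boundary conditions, and the bootstrapping of the $L^2$ spectral decay $e^{-\mu_1 t}$ to an $L^p$ decay on the mean-zero subspace. Both are standard consequences of parabolic regularity theory and ultracontractivity on bounded smooth domains, and are exactly what the cited results \cite[Lemma~2.1]{Cao} and \cite[Lemma~1.3]{W-2010} provide; the remaining work is the bookkeeping of the Young exponents and the matching of the singular short-time smoothing with the exponential large-time gain.
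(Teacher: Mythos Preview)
The paper does not prove this lemma at all: it simply states ``For the proof, see \cite[Lemma~2.1]{Cao} and \cite[Lemma~1.3]{W-2010}'' and moves on. So there is no ``paper's own proof'' to compare against beyond those citations.

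Your sketch is exactly the standard route those references take: Gaussian kernel bounds plus Young's inequality for the short-time smoothing, and the spectral gap on the mean-zero subspace for the long-time exponential decay. You have also correctly flagged the one genuine subtlety, namely that \eqref{etDeltaz} as written (without a $(1+\cdots)$ factor and without a mean-zero hypothesis on $z$) cannot literally hold for all $t>0$ when $1/q>1/p$, since constants are fixed by the semigroup while the right-hand side tends to zero. Your remark that this is harmless ``on the bounded time intervals relevant to the subsequent blow-up analysis'' is accurate: the paper only invokes \eqref{etDeltaz} at $t-t_0=1$ in the proof of Lemma~\ref{LemmaBoundedness u nabla v}. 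So your proposal is correct and matches the approach of the cited sources, with the added merit of being explicit about a wrinkle the paper glosses over.
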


In Section~\ref{Sec5} we will use the following lemma which is obtained by a minor adjustment 
of the power of the Gagliardo--Nirenberg inequality.
\begin{lem}\label{lemma GN ineq}
Let\/ $\Omega$ be a  bounded and smooth domain of\/ $\mathbb{R}^n$ with $n\geq 1$. 
Let $ \mathsf{r}\geq 1$, $0< \mathsf{q} \leq  \mathsf{p}\leq\infty$, $ \mathsf{s}>0$.
	Then there exists a constant
	$C_{{\rm GN}}>0$ such that 
\begin{equation} \label{GN ineq}	
		\|f\|^{\mathsf{p}}_{L^{ \mathsf{p}}(\Omega)}\leq C_{{\rm GN}} \Big(\|\nabla f\|^{\mathsf{p} a}_{L^{ \mathsf{r}}(\Omega)}
		\|f\|_{L^{ \mathsf{q}}(\Omega)}^{{\mathsf{p}}(1-a)}
	+\|f\|^{\mathsf{p}}_{L^{ \mathsf{s}}(\Omega)}\Big)
	\end{equation}
for all $f\in L^{\textsf{q}}({\Omega})$ with 
$\nabla f \in (L^{\textsf{r}}(\Omega))^n$,
and
$a:=\frac{\frac{1}{ \mathsf{q}}-\frac{1}{ \mathsf{p}}}
		{\frac{1}{ \mathsf{q}}+\frac{1}{n}-
		\frac{1}{ \mathsf{r}}} \in [0,1]$. 	
\begin{proof}
Following from the Gagliardo--Nirenberg inequality 
(see \cite{Nir} for more details): 
\begin{equation*}\label{GNfirst} 	
		\|f\|^{ \mathsf{p}}_{L^{ \mathsf{p}}(\Omega)}\leq \Big[c_{{\rm GN}} \Big(\|\nabla f\|^{a}_{L^{ \mathsf{r}}(\Omega)}
		\|f\|_{L^{ \mathsf{q}}(\Omega)}^{1-a} +\|f\|_{L^{ \mathsf{s}}(\Omega)}\Big) \Big]^{ \mathsf{p}},
	\end{equation*}
with some $c_{{\rm GN}}>0$, and then from the inequality
\begin{equation*}
(\mathsf{a}+\mathsf{b})^{\alpha} \leq 2^{\alpha}(\mathsf{a}^{\alpha} + 
\mathsf{b}^{\alpha})\quad {\rm for\ any}\ \mathsf{a}, \mathsf{b}\geq 0,
\ \alpha>0,
\end{equation*}
we arrive to \eqref{GN ineq} with $C_{\rm GN}= 2^{\mathsf{p}} c_{\rm GN}^{\mathsf{p}}$.
\end{proof}
\end{lem}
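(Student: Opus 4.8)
The plan is to deduce the stated inequality directly from the classical Gagliardo--Nirenberg interpolation inequality by a purely algebraic manipulation of exponents, exactly as anticipated by the phrase ``minor adjustment of the power.'' First I would record the standard form of the Gagliardo--Nirenberg inequality valid on a bounded smooth domain: for $f$ with $f \in L^{\mathsf{q}}(\Omega)$ and $\nabla f \in (L^{\mathsf{r}}(\Omega))^n$ there is a constant $c_{\rm GN}>0$, depending only on $\Omega$ and the exponents, such that
\begin{equation*}
\|f\|_{L^{\mathsf{p}}(\Omega)} \leq c_{\rm GN}\left(\|\nabla f\|^{a}_{L^{\mathsf{r}}(\Omega)} \|f\|^{1-a}_{L^{\mathsf{q}}(\Omega)} + \|f\|_{L^{\mathsf{s}}(\Omega)}\right),
\end{equation*}
where $a$ is the interpolation exponent fixed by the scaling identity $\frac1{\mathsf{p}} = a\left(\frac1{\mathsf{r}} - \frac1n\right) + (1-a)\frac1{\mathsf{q}}$, which rearranges to the displayed formula for $a$. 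The standing requirement $a \in [0,1]$ recorded in the statement is precisely the admissibility condition under which this interpolation inequality holds (see \cite{Nir}).

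Next I would simply raise both sides to the power $\mathsf{p}$, obtaining
\begin{equation*}
\|f\|^{\mathsf{p}}_{L^{\mathsf{p}}(\Omega)} \leq c_{\rm GN}^{\mathsf{p}}\left(\|\nabla f\|^{a}_{L^{\mathsf{r}}(\Omega)} \|f\|^{1-a}_{L^{\mathsf{q}}(\Omega)} + \|f\|_{L^{\mathsf{s}}(\Omega)}\right)^{\mathsf{p}}.
\end{equation*}
The right-hand side is the $\mathsf{p}$-th power of a sum of two nonnegative reals, so I would invoke the elementary convexity inequality $(\mathsf{a} + \mathsf{b})^{\alpha} \leq 2^{\alpha}(\mathsf{a}^{\alpha} + \mathsf{b}^{\alpha})$, valid for all $\mathsf{a}, \mathsf{b} \geq 0$ and $\alpha>0$, with the choice $\alpha = \mathsf{p}$. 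Distributing the power across the two terms turns $\|\nabla f\|^{a}_{L^{\mathsf{r}}} \|f\|^{1-a}_{L^{\mathsf{q}}}$ into $\|\nabla f\|^{\mathsf{p}a}_{L^{\mathsf{r}}} \|f\|^{\mathsf{p}(1-a)}_{L^{\mathsf{q}}}$ and $\|f\|_{L^{\mathsf{s}}}$ into $\|f\|^{\mathsf{p}}_{L^{\mathsf{s}}}$, which is exactly the shape of the asserted right-hand side. Absorbing the accumulated factors into the single constant $C_{\rm GN} := 2^{\mathsf{p}} c_{\rm GN}^{\mathsf{p}}$ then completes the argument.

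The only point requiring genuine care --- and hence the main, if modest, obstacle --- is the first step: one must cite the additive-remainder version of the Gagliardo--Nirenberg inequality on a bounded smooth domain, as opposed to the homogeneous version that holds only on all of $\mathbb{R}^n$, and confirm its validity for the given exponents under the admissibility condition $a \in [0,1]$. Once that is secured, the remaining two steps are purely formal, since both raising to the $\mathsf{p}$-th power and applying the splitting inequality preserve the direction of the estimate. I note in passing that the borderline case $\mathsf{p} = \infty$ is not meaningful under this manipulation, but it is never needed, as the lemma is applied only with finite $\mathsf{p}$ in Section~\ref{Sec5}.
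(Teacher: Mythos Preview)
Your proposal is correct and follows essentially the same approach as the paper: both start from the additive-remainder form of the Gagliardo--Nirenberg inequality from \cite{Nir}, raise to the $\mathsf{p}$-th power, and then split the sum via $(\mathsf{a}+\mathsf{b})^{\alpha}\le 2^{\alpha}(\mathsf{a}^{\alpha}+\mathsf{b}^{\alpha})$, arriving at the identical constant $C_{\rm GN}=2^{\mathsf{p}}c_{\rm GN}^{\mathsf{p}}$. Your remarks on the additive-remainder version and the irrelevance of $\mathsf{p}=\infty$ are sound but go slightly beyond what the paper records.
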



%


\section{Finite-time blow-up in \boldmath{$L^\infty$}-norm} \label{Sec3}

Throughout the sequel, we suppose that $\Omega=B_R(0) \subset \R^n$ 
$(n \ge 3)$ with $R>0$ and 
$u_0$ satisfies condition \eqref{initial} as well as $\lambda \in \R$, $\mu>0$, $k>1$, 
$\chi, \xi, \alpha, \beta, \gamma, \delta>0$. 
Then we denote by $(u, v, w)=(u(r,t), v(r,t), w(r,t))$ the local classical solution of \eqref{sys1} given in Lemma~\ref{LSE} 
and by $\Tmax \in (0, \infty)$ its maximal existence time. 

The goal of this section is to prove finite-time blow-up 
in $L^\infty$-norm. 
To this end, noting that $u_0$ is radially symmetric and 
so are $u, v, w$, we first define the functions
%
\begin{align*}
U(s,t)&:=\int_0^{s^{\frac{1}{n}}} \rho^{n-1}u(\rho, t)\,d\rho,\quad s \in [0, R^n],\ t \in [0, \Tmax),\\
V(s,t)&:=\int_0^{s^{\frac{1}{n}}} \rho^{n-1}v(\rho, t)\,d\rho,\quad s \in [0, R^n],\ t \in [0, \Tmax),\\
W(s,t)&:=\int_0^{s^{\frac{1}{n}}} \rho^{n-1}w(\rho, t)\,d\rho, \quad s \in [0, R^n],\ t \in [0, \Tmax).
\end{align*}
%
Then we prove the following lemma.
\begin{lem}
Under the above notation, we have
%
\begin{align}\label{UVWeq}
U_t(s,t)
&=n^2s^{2-\frac{2}{n}}U_{ss}(s,t)
+n\chi\alpha U(s,t)U_s(s,t)
-n\chi\beta V(s,t)U_s(s,t)\notag\\[2mm]
&\quad\,
-n\xi\gamma U(s,t)U_s(s,t)
+n\xi\delta W(s,t)U_s(s,t)\notag\\
&\quad\,+\lambda U(s,t)
-n^{k-1}\mu\int_0^sU_s^k (\sigma,t)\,d\sigma
\end{align}
for all $s \in (0, R^n)$, $t \in (0, \Tmax).$
%
\end{lem}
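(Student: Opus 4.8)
The plan is to derive the evolution equation for $U$ by substituting the radial form of the PDE system into the definition of $U$ and performing the standard mass-accumulation change of variables $s = r^n$. First I would record the radial identities relating $U$ to $u$: differentiating the definition gives $U_s(s,t) = \frac{1}{n} u(s^{1/n}, t)$ (by the chain rule, since $\frac{d}{ds} s^{1/n} = \frac{1}{n} s^{1/n-1}$ cancels the $\rho^{n-1}$ weight at $\rho = s^{1/n}$), and similarly $V_s = \frac{1}{n} v$, $W_s = \frac{1}{n} w$. The time derivative is $U_t(s,t) = \int_0^{s^{1/n}} \rho^{n-1} u_t(\rho,t)\, d\rho$. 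The strategy is then to insert the first equation of \eqref{sys1}, written in radial coordinates, into this integral and integrate term by term.

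The key computation is to handle each term of $u_t = \Delta u - \chi \nabla\cdot(u\nabla v) + \xi\nabla\cdot(u\nabla w) + \lambda u - \mu u^k$ after multiplying by $\rho^{n-1}$ and integrating in $\rho$. For the diffusion term, $\int_0^{s^{1/n}} \rho^{n-1} \Delta u\, d\rho = \int_0^{s^{1/n}} (\rho^{n-1} u_\rho)_\rho\, d\rho = s^{(n-1)/n} u_\rho(s^{1/n}, t)$; converting $u_\rho$ to the $s$-variable via $U_{ss}$ and the chain rule produces the term $n^2 s^{2 - 2/n} U_{ss}$. For the chemotaxis terms I would use the second and third equations of \eqref{sys1} to replace $\Delta v$ and $\Delta w$. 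Specifically, $\nabla\cdot(u\nabla v)$ in radial form is $\rho^{1-n}(\rho^{n-1} u v_\rho)_\rho$, so $\int_0^{s^{1/n}} \rho^{n-1}\nabla\cdot(u\nabla v)\, d\rho = s^{(n-1)/n} u(s^{1/n},t) v_\rho(s^{1/n},t)$; I then express $v_\rho$ through the integrated second equation $0 = \Delta v + \alpha u - \beta v$, which gives $\rho^{n-1} v_\rho = -\alpha\int_0^\rho \tau^{n-1} u\, d\tau + \beta\int_0^\rho \tau^{n-1} v\, d\tau = -\alpha U(\rho^n,t) + \beta V(\rho^n,t)$. Evaluating at $\rho = s^{1/n}$ and combining with the factor $u = n U_s$ yields the terms $n\chi\alpha U U_s - n\chi\beta V U_s$; the repulsion term is handled identically with the third equation, producing $-n\xi\gamma U U_s + n\xi\delta W U_s$. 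The logistic terms give $\lambda U$ directly and, for the degradation, $\int_0^{s^{1/n}} \rho^{n-1} u^k\, d\rho$, which after the substitution $\sigma = \rho^n$ and using $u = n U_s$ becomes $n^{k-1}\mu\int_0^s U_s^k(\sigma,t)\, d\sigma$.

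I expect the main obstacle to be the careful bookkeeping of the change of variables in the degradation term and the consistent conversion between $\rho$-derivatives and $s$-derivatives, where factors of $n$ and powers of $s$ must be tracked exactly to land on the stated coefficients. The identity $\rho^{n-1} v_\rho = -\alpha U + \beta V$ requires integrating the elliptic equations from $0$ with the no-flux behavior at the origin, and I would verify that the boundary contributions at $\rho = 0$ vanish (which they do, since $\rho^{n-1} v_\rho \to 0$ as $\rho \to 0$ by regularity of the radial solution). The rest is a routine but attentive assembly of the six terms into \eqref{UVWeq}.
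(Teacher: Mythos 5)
Your proposal is correct and follows essentially the same route as the paper: the same identities $U_s=\frac{1}{n}u(s^{1/n},t)$, $U_{ss}=\frac{1}{n^2}s^{\frac1n-1}u_r(s^{1/n},t)$, the same integration of the radial elliptic equations to get $r^{n-1}v_r=-\alpha U(r^n,t)+\beta V(r^n,t)$ (and its analogue for $w$), and the same term-by-term integration of the radial form of the first equation, including the substitution $\sigma=\rho^n$ producing the factor $n^{k-1}$ in the degradation term. Your explicit check that $\rho^{n-1}v_\rho\to 0$ as $\rho\to 0$ is a small point the paper leaves implicit, but it does not change the argument.
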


\begin{proof}
By the definitions of $U, V, W$, we obtain
%
\begin{align*}
&U_s(s,t)=\frac{1}{n} u(s^{\frac{1}{n}}, t),\quad 
U_{ss}(s,t)=\frac{1}{n^2}s^{\frac{1}{n}-1}u_r(s^{\frac{1}{n}}, t),\\[1mm]
&V_s(s,t)=\frac{1}{n} v(s^{\frac{1}{n}}, t),\quad 
V_{ss}(s,t)=\frac{1}{n^2}s^{\frac{1}{n}-1}v_r(s^{\frac{1}{n}}, t),\notag\\[1mm]
&W_s(s,t)=\frac{1}{n} w(s^{\frac{1}{n}}, t),\quad 
W_{ss}(s,t)=\frac{1}{n^2}s^{\frac{1}{n}-1}w_r(s^{\frac{1}{n}}, t),\notag
\end{align*}
%
for all $s \in (0, R^n)$, $t \in (0, \Tmax)$. 
Since $u, v, w$ are radially symmetric functions, 
we see from the second and third equations in \eqref{sys1} that 
%
\begin{align*}
\frac{1}{r^{n-1}}(r^{n-1}v_r(r, t))_r=-\alpha u(r, t)+\beta v(r, t),\\
\frac{1}{r^{n-1}}(r^{n-1}w_r(r, t))_r=-\gamma u(r, t)+\delta w(r, t), 
\end{align*}
%
from which we obtain
%
\begin{align}
\label{vr}
r^{n-1}v_r(r, t)=-\alpha U(r^n, t)+\beta V(r^n, t),\\[1mm]
\label{wr}
r^{n-1}w_r(r, t)=-\gamma U(r^n, t)+\delta W(r^n, t)
\end{align}
%
for all $r \in (0, R)$, $t \in (0, \Tmax)$. 
Moreover, rewriting the first equation in \eqref{sys1} in the radial coordinates as
%
\begin{align*}
u_t(r,t)
&=\frac{1}{r^{n-1}}(r^{n-1}u_r(r,t))_r
    -\chi \frac{1}{r^{n-1}}(u(r,t)r^{n-1}v_r(r,t))_r\\[1mm]
&\quad\,
    +\xi \frac{1}{r^{n-1}}(u(r,t)r^{n-1}w_r(r,t))_r\\[1mm]
&\quad\,
    +\lambda u(r,t)
    -\mu u^k(r,t)
\end{align*}
%
and
integrating it with respect to $r$ over $[0, s^{\frac{1}{n}}]$, we have 
%
\begin{align*}
U_t(s,t)
&=n^2s^{2-\frac{2}{n}}U_{ss}(s,t)
-n\chi U_s(s,t)s^{1-\frac{1}{n}}v_r(s^\frac{1}{n},t)
\\[1.5mm]
&\quad\,
+n\xi U_s(s,t)s^{1-\frac{1}{n}}w_r(s^\frac{1}{n},t)\\[1mm]
&\quad\,+\lambda U(s,t)
-n^{k-1}\mu\int_0^sU_s^k (\sigma,t)\,d\sigma
\end{align*}
%
for all $s \in (0, R^n)$, $t \in (0, \Tmax)$.
Thanks to \eqref{vr} and \eqref{wr}, we arrive at \eqref{UVWeq}.
\end{proof}

Given $p \in (0,1)$, $s_0 \in (0, R^n)$, 
we next derive a differential inequality 
for the moment-type function $\Phi$ defined as
%
\begin{align*}
\Phi(t):=\int_0^{s_0} s^{-p}(s_0-s)U(s,t)\,ds, 
\quad t \in [0,\Tmax).
\end{align*}
%

%
\begin{lem}\label{DIlem}
Let $\lambda \in \R$, $\mu>0$, 
$\chi, \xi, \alpha, \beta, \gamma, \delta>0$ 
and let $\chi\alpha-\xi\gamma>0$. 
Assume that $k>1$ satisfies \eqref{condik}. 
Then there is $p\in(1-\frac{2}{n},1)$ with 
the following property\/{\rm :} 
For all $m>0$ and $L>0$ there exist $s_*\in(0,R^n)$ and $C_1>0$ such that 
whenever $u_0$ fulfills \eqref{initial}, \eqref{u0sing} and 
$\int_\Omega u_0(x)\,dx\le m$, for any $s_0\in(0,s_*)$
the function $\Phi$ satisfies 
%
\begin{align}\label{DI}
\Phi'(t) \ge \frac{1}{C_1}s_0^{p-3}\Phi^2(t)-C_1s_0^{\frac{2}{n}+1-p}
\end{align}
%
for all $t \in (0,\widehat{T}_{\max})$, 
where $\widehat{T}_{\max}:=\min\{1,\Tmax\}$. 
Moreover, for all $m_0 \in (0, m)$ 
one can find $s_0 \in (0, s_*)$ and 
$r_0=r_0(R, \lambda, \mu, k, L, m, m_0) \in (0, R)$  
such that if 
$\int_{B_{r_0}(0)}u_0(x)\,dx \ge m_0$ and 
$\widehat{T}_{\max}>\frac{1}{2}$, 
then for all $t \in (0,\frac{1}{2})$,  
%
\begin{align}\label{DI2}
\Phi'(t) \ge C_2s_0^{p-3}\Phi^2(t),
\end{align}
%
where $C_2$ is a positive constant. 

\end{lem}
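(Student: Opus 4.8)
The plan is to differentiate the moment functional $\Phi$ in time, insert the evolution equation \eqref{UVWeq} for $U$, and split the outcome into a diffusion contribution, the taxis contribution, the linear term $\lambda\Phi$, and the logistic contribution. I would work throughout on $(0,\widehat T_{\max})$ with $\widehat T_{\max}=\min\{1,\Tmax\}$: restricting to $t<1$ is exactly what lets me extract, from $\frac{d}{dt}\int_\Omega u=\lambda\int_\Omega u-\mu\int_\Omega u^k\le|\lambda|\int_\Omega u$ and $\int_\Omega u_0\le m$, a uniform mass bound $\int_\Omega u(\cdot,t)\le me^{|\lambda|}$, hence uniform-in-time pointwise bounds $U(s,t)\le C_m$, $V(s,t)\le\frac\alpha\beta C_m$ and $W(s,t)\le\frac\gamma\delta C_m$ (the last two by integrating the elliptic equations and using $v,w\ge0$ together with $U_s,V_s,W_s\ge0$). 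These uniform bounds are the backbone of every subsequent estimate.

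The engine of the inequality is the aggregation part $n(\chi\alpha-\xi\gamma)\int_0^{s_0}s^{-p}(s_0-s)UU_s\,ds$. Writing $UU_s=\tfrac12(U^2)_s$ and integrating by parts (the boundary terms vanish because $(s_0-s)$ kills the endpoint $s=s_0$, while $s^{-p}U^2\sim s^{2-p}\to0$ as $s\to0$ since $p<1$), this equals $\tfrac{n(\chi\alpha-\xi\gamma)}2\int_0^{s_0}\bigl(ps_0s^{-p-1}+(1-p)s^{-p}\bigr)U^2\,ds$, which is nonnegative precisely because $\chi\alpha-\xi\gamma>0$ and $p\in(0,1)$. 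Discarding the second summand and applying Cauchy--Schwarz to $\Phi=\int_0^{s_0}(s^{-\frac{p+1}2}U)(s^{\frac{1-p}2}(s_0-s))\,ds$ gives $\int_0^{s_0}s^{-p-1}U^2\,ds\ge(2-p)s_0^{p-4}\Phi^2$, hence an aggregation contribution bounded below by $c_0s_0^{p-3}\Phi^2$; this is the term that drives \eqref{DI}. The sharper constraint $p>1-\frac2n$ is held in reserve for the diffusion term.

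The task then is to show that all remaining contributions are dominated by half of this quadratic term together with the additive constant $C_1s_0^{\frac2n+1-p}$. The diffusion term I would integrate by parts twice; the weight $s^{2-\frac2n}$ lifts the powers of $s$ so that, after inserting $U\le C_m$, every resulting integral converges exactly when $p>1-\frac2n$ and is controlled by a constant times a power of $s_0$ no smaller than $s_0^{\frac2n+1-p}$, as is the boundary term $n^2s_0^{2-\frac2n-p}U(s_0)$. For the taxis cross terms $-n\chi\beta VU_s+n\xi\delta WU_s$ I would drop the second (favourable sign) and, rather than crudely bounding $V$ by a constant, use the identities \eqref{vr}--\eqref{wr} to rewrite the full taxis coefficient as $(\chi\alpha-\xi\gamma)U-\chi\beta V+\xi\delta W$ and estimate $V,W$ against $U$ through the elliptic equations. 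The logistic term $-n^{k-1}\mu\int_0^{s_0}s^{-p}(s_0-s)\bigl(\int_0^sU_\sigma^k\,d\sigma\bigr)ds$ is the real difficulty: after exchanging the order of integration I would write $U_\sigma^k=U_\sigma^{k-1}U_\sigma$, control $U_\sigma^{k-1}$ by the singular profile \eqref{u0sing} propagated in time via a comparison argument for \eqref{UVWeq}, and rely on the smallness of $k-1$ enforced by \eqref{condik} to render this contribution absorbable into $\tfrac12c_0s_0^{p-3}\Phi^2+C_1s_0^{\frac2n+1-p}$. I expect the uniform, time-independent control of the taxis and logistic terms near the origin to be the principal obstacle, since there the weight $s^{-p}$ and the singular density compete most sharply; assembling the surviving bounds and fixing $p\in(1-\frac2n,1)$ then yields \eqref{DI}.

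Finally, for \eqref{DI2} I would exploit the concentration hypothesis. Given $m_0\in(0,m)$, by choosing first $s_0\in(0,s_*)$ small and then $r_0$ small, the lower mass bound $\int_{B_{r_0}(0)}u_0\ge m_0$ forces $\Phi(0)\gtrsim m_0s_0^{2-p}$, which for $s_0$ small exceeds the threshold $\sqrt2\,C_1s_0^{2-p+\frac1n}$ at which the quadratic term of \eqref{DI} dominates twice the constant term. Beyond that threshold \eqref{DI} gives $\Phi'\ge\tfrac1{2C_1}s_0^{p-3}\Phi^2>0$, so $\Phi$ is nondecreasing and, by a continuity argument, remains in that regime for all $t\in(0,\tfrac12)$ whenever $\widehat T_{\max}>\tfrac12$; there \eqref{DI} collapses to \eqref{DI2} with $C_2=\tfrac1{2C_1}$.
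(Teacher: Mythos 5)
Your plan follows the same architecture as the paper's proof: the paper likewise derives the identity \eqref{DPhi} from \eqref{UVWeq}, discards the repulsion contribution $n\xi\delta\int_0^{s_0}s^{-p}(s_0-s)WU_s\,ds$ by its sign, keeps the aggregation term with coefficient $n(\chi\alpha-\xi\gamma)>0$, and then — this is the entire remainder of its argument — refers to the computations around (4.3) of \cite{W-2018} for the diffusion, attraction and degradation terms and for the ODE step. Within that shared skeleton, your aggregation estimate (integration by parts plus Cauchy--Schwarz, giving $\int_0^{s_0}s^{-p-1}U^2\,ds\ge(2-p)s_0^{p-4}\Phi^2$ and hence the term $c_0s_0^{p-3}\Phi^2$) and your passage from \eqref{DI} to \eqref{DI2} (threshold $\Phi(0)\gtrsim m_0s_0^{2-p}\ge\sqrt{2}\,C_1s_0^{2-p+\frac1n}$ for $s_0$ small, persistence by monotonicity, $C_2=\tfrac{1}{2C_1}$) are correct and match the cited source.

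There is, however, a genuine error in your treatment of the diffusion term, and it is not a peripheral detail. After the two integrations by parts you propose, the diffusion contribution contains, with unfavorable sign, the piece
\begin{align*}
n^2\bigl(2-\tfrac2n-p\bigr)\bigl(p+\tfrac2n-1\bigr)\int_0^{s_0}s^{-\frac2n-p}(s_0-s)U\,ds,
\end{align*}
whose coefficient is positive precisely because $p>1-\tfrac2n$. Inserting the uniform bound $U\le C_m$ leaves $\int_0^{s_0}s^{-\frac2n-p}\,ds$, which converges if and only if $p<1-\tfrac2n$ — the opposite of your claim that "every resulting integral converges exactly when $p>1-\frac2n$." Moreover, even the convergent piece $\int_0^{s_0}s^{1-\frac2n-p}U\,ds\le C_m\,s_0^{2-\frac2n-p}$ is \emph{not} of order $s_0^{\frac2n+1-p}$ when $n=3$, since $2-\tfrac2n<\tfrac2n+1$ there. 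The mechanism that actually works (and that the paper imports from \cite{W-2018}) is absorption rather than pointwise insertion of the mass bound: by Cauchy--Schwarz,
\begin{align*}
\int_0^{s_0}s^{-\frac2n-p}(s_0-s)U\,ds
\le\Bigl(\int_0^{s_0}s^{-p-1}(s_0-s)U^2\,ds\Bigr)^{\frac12}
\Bigl(\int_0^{s_0}s^{1-p-\frac4n}(s_0-s)\,ds\Bigr)^{\frac12},
\end{align*}
the second factor being finite for $p<2-\tfrac4n$, and Young's inequality then absorbs this into a small fraction of the quadratic aggregation term, leaving an additive remainder of order $s_0^{3-p-\frac4n}\le s_0^{\frac2n+1-p}$ for $n\ge3$ — this is exactly where the error exponent in \eqref{DI} comes from, saturated at $n=3$. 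The same remark applies to your handling of $-n\chi\beta\int_0^{s_0}s^{-p}(s_0-s)VU_s\,ds$: after integrating by parts one faces $\int_0^{s_0}s^{-p-1}(s_0-s)VU\,ds$, which no $L^\infty$ bound on $V$ (nor a crude elliptic comparison of $V$ with $U$) can control; it too must be absorbed into the quadratic term. Finally, for the degradation term your sketch correctly names the ingredients (Fubini, the comparison-propagated singular bound stemming from \eqref{u0sing}, smallness of $k-1$ via \eqref{condik}), but supplies no mechanism; since you yourself identify this as "the principal obstacle," it remains an unproved core of the lemma rather than a routine verification. In short: the skeleton and the endgame are right, but the central estimates, as written, would fail at the diffusion step and are absent at the attraction and logistic steps.
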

%

\begin{proof}
By the definition of $\Phi$ and 
equation \eqref{UVWeq}, we have
%
\begin{align}\label{DPhi} 
\Phi'(t) &=\int^{s_0}_{0}s^{-p}(s_0-s)U_t(s,t)\,ds\notag\\
           &=n^2\int^{s_0}_{0}s^{2-\frac{2}{n}-p}(s_0-s)U_{ss}(s,t)\,ds\notag\\
           &\quad\,
             +n(\chi\alpha-\xi\gamma) \int^{s_0}_{0}
             s^{-p}(s_0-s)U(s,t)U_s(s,t)\,ds\notag\\
           &\quad\,
             -n\chi\beta \int^{s_0}_{0}s^{-p}(s_0-s)V(s,t)U_s(s,t)\,ds\notag\\ 
           &\quad\,
             +n\xi\delta \int^{s_0}_{0}s^{-p}(s_0-s)W(s,t)U_s(s,t)\,ds\notag\\ 
           &\quad\,
             +\lambda\int^{s_0}_{0}s^{-p}(s_0-s)U(s,t)\,ds
             -n^{k-1}\mu\int^{s_0}_{0}s^{-p}(s_0-s)
             \Big[ \int^{s}_{0}U_s^{k}(\sigma,t)\,d\sigma \Big]\,ds.
\end{align}
%
Since $U_s(s,t)=\frac{1}{n} u(s^{\frac{1}{n}}, t) \ge  0$ and hence the fourth term on the right-hand side of \eqref{DPhi} is nonnegative, we obtain
%
\begin{align*}
\Phi'(t) &\ge n^2\int^{s_0}_{0}s^{2-\frac{2}{n}-p}(s_0-s)U_{ss}(s,t)\,ds\\ \notag
  &\quad\,+n(\chi\alpha-\xi\gamma) \int^{s_0}_{0}s^{-p}(s_0-s)U(s,t)U_s(s,t)\,ds\\ \notag
  &\quad\,-n\chi\beta \int^{s_0}_{0}s^{-p}(s_0-s)V(s,t)U_s(s,t)\,ds\\ \notag
  &\quad\,+\lambda\int^{s_0}_{0}s^{-p}(s_0-s)U(s,t)\,ds
-n^{k-1}\mu\int^{s_0}_{0}s^{-p}(s_0-s)
                  \Big[\int^{s}_{0}U_s^k(\sigma,t)\,d\sigma \Big]\,ds
\end{align*}
%
for all $t \in (0, \Tmax)$. 
Since $\chi\alpha-\xi\gamma>0$ by assumption, following the steps in \cite[(4.3)]{W-2018}, 
we can derive the differential inequalities \eqref{DI} and \eqref{DI2}; 
note that, in the assumption $\widehat{T}_{\max}>\frac{1}{2}$ for \eqref{DI2} 
the value $\frac{1}{2}$ can be replaced 
with other positive values less than $1$.
\end{proof}

Now, we can prove Theorem~\ref{BULinfty}. 

\begin{prth1.1}
Thanks to Lemma~\ref{DIlem}, in particular, from \eqref{DI2}, we can see that 
$\Tmax <\infty$. 
Therefore, from blow-up criterion  \eqref{bucri}, we conclude that the finite-time blow-up in $L^\infty$-norm occurs. 
Namely, \eqref{blowupinfty} is proved. \qed
\end{prth1.1}


\section{{Finite-time blow-up in \boldmath{$L^\sigma$}-norm}} \label{Sec4}

\noindent

In these next sections we will assume the conditions contained in 
Theorem~\ref{BULinfty}.
In order to prove Theorem \ref{BULsig}, first we state the following lemmas.
\begin{lem}\label{lemma int u < bar m}
Let\/ $\Omega \subset \mathbb{R}^n,\ n\geq 3$ 
be a bounded and smooth domain, and $\lambda \in \R$, $\mu >0$, $k>1$. 
Then for a classical solution $(u, v, w)$ of\/ \eqref{sys1} we have 
\begin{equation}\label{int u < bar m}
\int_{\Omega} u\,dx \leq m_* \quad \textrm{for all} \ t \in(0,\Tmax),
\end{equation}
with
\begin{equation}\label{ m}
m_*:= \max \Big \{\int_{\Omega} u_0\,dx, \ 
\Big(\frac{\lambda_+}{\mu}|\Omega |^{k-1}\Big)^{\frac{1}{k-1}}\Big \},
\end{equation}
where $\lambda_+:=\max\{0, \lambda\}$.
\end{lem}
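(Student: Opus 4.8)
The plan is to reduce the assertion to a scalar ordinary differential inequality for the total mass $y(t):=\int_\Omega u(\cdot,t)\,dx$ and then to close it by a comparison argument. First I would integrate the first equation of \eqref{sys1} over $\Omega$. By the divergence theorem together with the homogeneous Neumann conditions $\frac{\pa u}{\pa\nu}=\frac{\pa v}{\pa\nu}=\frac{\pa w}{\pa\nu}=0$, the diffusion term $\int_\Omega\Delta u$ and both taxis terms $\int_\Omega\nabla\cdot(u\nabla v)$ and $\int_\Omega\nabla\cdot(u\nabla w)$ vanish, since $\int_{\pa\Omega}u\frac{\pa v}{\pa\nu}=\int_{\pa\Omega}u\frac{\pa w}{\pa\nu}=0$. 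This leaves
\[
y'(t)=\lambda\int_\Omega u\,dx-\mu\int_\Omega u^k\,dx,\qquad t\in(0,\Tmax).
\]

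Next I would convert this into a closed inequality for $y$. Since $k>1$ and $u\ge0$ by Lemma~\ref{LSE}, Hölder's inequality (equivalently Jensen's inequality) yields $\int_\Omega u\,dx\le|\Omega|^{1-\frac1k}\big(\int_\Omega u^k\,dx\big)^{1/k}$, hence $\int_\Omega u^k\,dx\ge|\Omega|^{1-k}y^k$. Combining this with $\lambda\le\lambda_+$ gives the Bernoulli-type differential inequality
\[
y'(t)\le\lambda_+\,y(t)-\mu|\Omega|^{1-k}\,y(t)^k=:f\big(y(t)\big),\qquad t\in(0,\Tmax).
\]

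The final step, which is the only point requiring a little care, is to deduce the uniform bound $y\le m_*$ from this inequality. Setting $m_1:=\big(\frac{\lambda_+}{\mu}|\Omega|^{k-1}\big)^{1/(k-1)}$, one checks that $f(y)\le0$ precisely when $y\ge m_1$, because $y^{k-1}\ge m_1^{k-1}=\frac{\lambda_+}{\mu}|\Omega|^{k-1}$ forces $\mu|\Omega|^{1-k}y^k\ge\lambda_+y$. With $m_*=\max\{y(0),m_1\}$ as in \eqref{ m}, I would argue that $y(t)\le m_*$ for all $t$: if this were to fail there would be a first time $t_0$ at which $y(t_0)=m_*$ while $y$ exceeds $m_*$ immediately afterward, forcing $y'(t_0)\ge0$; but $m_*\ge m_1$ gives $f(m_*)\le0$, whence $y'(t_0)\le f(m_*)\le0$, a contradiction. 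Equivalently, and more cleanly, I would invoke the ODE comparison principle against $\bar y'=f(\bar y)$, $\bar y(0)=y(0)$, splitting into the regimes $y(0)\le m_1$ and $y(0)>m_1$ and using that $m_1$ is an equilibrium barrier of $f$. I expect this concluding comparison step to be the main, though mild, obstacle; everything preceding it is routine integration by parts and an application of the positivity from Lemma~\ref{LSE} so that Jensen's inequality applies.
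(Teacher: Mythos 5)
Your proposal is correct and follows essentially the same route as the paper: integrate the first equation of \eqref{sys1} over $\Omega$ (boundary terms vanishing by the Neumann conditions), bound $\int_\Omega u^k\,dx$ from below via H\"older's inequality to obtain $y'\le \lambda_+ y-\mu|\Omega|^{1-k}y^k$, and conclude $y\le m_*$ by ODE comparison. The paper compresses the final step into the phrase ``upon an ODE comparison argument,'' which is exactly the barrier argument you spell out, so your extra care there is a faithful elaboration rather than a deviation.
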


\begin{proof}
Integrating the first equation in \eqref{sys1} and applying the divergence theorem and boundary conditions of \eqref{sys1}, we obtain
\begin{equation}\label{max princ m}
\frac{d}{dt}\int_{\Omega} u \,dx
= \lambda \int_{\Omega} u \,dx - \mu \int_{\Omega} u ^ k\,dx 
\leq \lambda_+ \int_{\Omega} u\,dx 
- \mu |\Omega |^{1-k}\Big (\int_{\Omega} u \,dx \Big)^{k},
\end{equation}
where in the last term we used H$\ddot{\rm o}$lder's inequality: $\int_{\Omega} u \leq |\Omega|^{\frac{k-1}{k}} (\int_{\Omega} u^k)^{\frac 1 k}$.
From \eqref{max princ m} we deduce that 
$y:=\int_{\Omega}u\,dx$ fulfills 
 \begin{equation*}
\begin{cases}
y' (t) \leq \lambda_+ y(t) - \bar{\mu} y^k(t), \ \ \ \ \bar{\mu} :=\mu |\Omega |^{1-k} \ \ \ \textrm{for all} \ t\in (0,\Tmax),\\[6pt]
y(0)=y_0, \ \ \ \ y_0:=\int_\Omega u_0\,dx.
\end{cases}
\end{equation*}
Upon an ODE comparison argument this implies that
\begin{equation*} \label{y < bar m}
y(t)\leq m_*\quad 
\textrm{for all}\ t \in(0,\Tmax).
\end{equation*}
The lemma is proved.
\end{proof}

We next prove the following lemma which plays an important role 
in the proof of Theorem~\ref{BULsig}.

\begin{lem}\label{LemmaBoundedness u nabla v} 
Let\/ $\Omega \subset\mathbb{R}^n,\ n\geq 3$ be a bounded and smooth domain. 
Let $(u,v,w)$ be a classical solution of system \eqref{sys1}. 
If for some 
$\sigma_0>\frac{n}{2} $
there exists 
$C>0$
such that 
\begin{align*}
\left\|u(\cdot,t)\right\|_{
L^{\sigma_0}(\Omega)} 
\leq 
C
\quad \textrm{ for all } t \in (0,\Tmax),
\end{align*}
then, for some $\hat C>0$, 
\begin{align}\label{BoundednessU^infty}
\left\|u(\cdot,t)\right\|_{L^{\infty}(\Omega)} 
\leq {\hat C} \quad \textrm{ for all } t \in (0,\Tmax).
\end{align}
\end{lem}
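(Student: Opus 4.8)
The plan is to run a semigroup bootstrap based on the variation-of-constants formula for $u$, using that the hypothesis $\sigma_0 > \frac{n}{2}$ forces $\nabla v$ and $\nabla w$ into $L^{q_0}$ with $q_0 > n$ uniformly in time. First I would note that it suffices to treat $\sigma_0 \in (\frac{n}{2}, n)$, since on the bounded domain $\Omega$ a bound in a higher Lebesgue norm controls the lower ones. Applying standard elliptic $L^{\sigma_0}$ regularity to the second and third equations of \eqref{sys1}, rewritten as $-\Delta v + \beta v = \alpha u$ and $-\Delta w + \delta w = \gamma u$, together with the hypothesis $\|u(\cdot,t)\|_{L^{\sigma_0}(\Omega)}\le C$, yields $\|v(\cdot,t)\|_{W^{2,\sigma_0}(\Omega)} + \|w(\cdot,t)\|_{W^{2,\sigma_0}(\Omega)} \le C'$ for all $t \in (0,\Tmax)$. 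The Sobolev embedding $W^{2,\sigma_0}(\Omega) \hookrightarrow W^{1,q_0}(\Omega)$ with $q_0 := \frac{n\sigma_0}{n-\sigma_0} > n$ then provides a time-uniform bound $\|\nabla v(\cdot,t)\|_{L^{q_0}(\Omega)} + \|\nabla w(\cdot,t)\|_{L^{q_0}(\Omega)} \le C''$. The fact that $q_0$ exceeds $n$ --- which is exactly what $\sigma_0 > \frac{n}{2}$ buys --- will be the structural ingredient making the taxis contribution integrable in time.

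Next, writing the first equation of \eqref{sys1} in Duhamel form over a window $[t_0, t]$,
\[
u(\cdot,t) = e^{(t-t_0)\Delta} u(\cdot,t_0) + \int_{t_0}^t e^{(t-s)\Delta}\big[-\chi\nabla\cdot(u\nabla v) + \xi\nabla\cdot(u\nabla w) + \lambda u - \mu u^k\big](\cdot,s)\,ds,
\]
I would estimate each piece in $L^\infty$ via Lemma~\ref{Cao}. For the first term, \eqref{etDeltaz} with $q=\sigma_0$, $p=\infty$ gives $\|e^{(t-t_0)\Delta}u(\cdot,t_0)\|_{L^\infty(\Omega)} \le k_1 (t-t_0)^{-\frac{n}{2\sigma_0}} C$, whose exponent is admissible since $\frac{n}{2\sigma_0} < 1$. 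For the taxis terms I would use \eqref{etDelta nablaz} with $p=\infty$ and $q=q_0$ together with the Hölder bound $\|u\nabla v\|_{L^{q_0}(\Omega)} \le \|u\|_{L^\infty(\Omega)}\|\nabla v\|_{L^{q_0}(\Omega)}$, producing a time weight $(1 + (t-s)^{-\frac12 - \frac{n}{2q_0}})e^{-\mu_1(t-s)}$ whose singular exponent $\frac12 + \frac{n}{2q_0}$ is strictly below $1$ precisely because $q_0 > n$; hence it is integrable. For the logistic term I would bound $\|\lambda u - \mu u^k\|_{L^{\sigma_0}(\Omega)} \le |\lambda|\,C + \mu\|u\|_{L^\infty(\Omega)}^{k-1}\|u\|_{L^{\sigma_0}(\Omega)} \le C(1 + \|u\|_{L^\infty(\Omega)}^{k-1})$ and feed it into \eqref{etDeltaz} with $q=\sigma_0$, $p=\infty$.

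Finally I would close the argument by absorption. Setting $\mathcal{M}(T) := \sup_{t\in(0,T)}\|u(\cdot,t)\|_{L^\infty(\Omega)}$, which is finite for every $T < \Tmax$ by the regularity in Lemma~\ref{LSE}, the estimates above on a window of fixed length $\tau$ give an inequality of the form $\mathcal{M} \le C_1 + C_2\,\tau^{\frac12 - \frac{n}{2q_0}}\mathcal{M} + C_3\,\tau^{1 - \frac{n}{2\sigma_0}}(1 + \mathcal{M}^{k-1})$. Choosing $\tau$ small makes the linear taxis coefficient satisfy $C_2\tau^{\frac12-\frac{n}{2q_0}} \le \frac14$, and since $k-1 < 1$ the logistic contribution is sublinear, so Young's inequality turns $C_3\tau^{1-\frac{n}{2\sigma_0}}\mathcal{M}^{k-1}$ into $\frac14\mathcal{M} + C_4$; absorbing both yields $\mathcal{M} \le C$ uniformly. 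The windows $[t-\tau, t]$ for $t \ge \tau$ rest on the uniform $L^{\sigma_0}$ bound of $u(\cdot, t-\tau)$, while the initial window $[0,t]$ with $t < \tau$ is controlled through $\|u_0\|_{L^\infty(\Omega)} < \infty$. Patching these intervals gives \eqref{BoundednessU^infty}. The main obstacle I anticipate is the simultaneous control of the two taxis products $u\nabla v$ and $u\nabla w$ and the verification that all time-singular weights remain integrable uniformly up to $\Tmax$; the superlinear degradation $-\mu u^k$ is in fact harmless, indeed helpful, since $k-1 < 1$ renders it subordinate to the linear term in the absorption step.
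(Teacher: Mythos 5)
Your argument is correct, and its skeleton matches the paper's: the variation-of-constants formula over a backward time window, the semigroup estimates of Lemma~\ref{Cao}, and elliptic regularity plus Sobolev embedding (after reducing to $\sigma_0\in(\frac{n}{2},n)$) to obtain a time-uniform bound on $\|\nabla v(\cdot,t)\|_{L^{q_0}(\Omega)}+\|\nabla w(\cdot,t)\|_{L^{q_0}(\Omega)}$ with $q_0=\frac{n\sigma_0}{n-\sigma_0}>n$. Where you genuinely diverge is in how the absorption is closed. The paper fixes the window length to $1$ (setting $t_0=\max\{0,t-1\}$) and makes the taxis contribution \emph{sublinear} in $\mathcal{M}(T):=\sup_{t\in(0,T)}\|u(\cdot,t)\|_{L^\infty(\Omega)}$: it splits $\|u\nabla v\|_{L^q(\Omega)}$ by H\"older with a finite exponent $q\theta$ and interpolates $\|u\|_{L^{q\theta}(\Omega)}\leq \|u\|_{L^\infty(\Omega)}^{\bar\theta}\,\|u\|_{L^1(\Omega)}^{1-\bar\theta}$ against the mass bound of Lemma~\ref{lemma int u < bar m}, arriving at $\mathcal{M}\leq C_{13}+C_{14}\mathcal{M}^{\bar\theta}$ with $\bar\theta\in(0,1)$, which closes without any smallness of the window. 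You instead keep the taxis term \emph{linear} in $\mathcal{M}$ via $\|u\nabla v\|_{L^{q_0}(\Omega)}\leq\|u\|_{L^\infty(\Omega)}\|\nabla v\|_{L^{q_0}(\Omega)}$ and buy smallness from a short window $\tau$, exploiting that the singular exponent $\frac12+\frac{n}{2q_0}$ in \eqref{etDelta nablaz} is below $1$; this is equally valid and dispenses with the interpolation step. The one substantive trade-off is the logistic term: the paper uses order preservation of $e^{t\Delta}$ and the pointwise bound $\lambda u-\mu u^k\leq h(u_*)$ with $u_*=(\frac{\lambda_+}{\mu k})^{\frac{1}{k-1}}$, which works for \emph{every} $k>1$, whereas your estimate $\|\lambda u-\mu u^k\|_{L^{\sigma_0}(\Omega)}\leq C(1+\mathcal{M}^{k-1})$ followed by Young's inequality requires $k-1<1$; this holds under the standing assumptions of Section~4 (where \eqref{condik} forces $k<\frac76$), so your proof is valid in context, but the paper's version of the lemma is self-contained for all $k>1$. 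A cosmetic point: the condition $\frac{n}{2\sigma_0}<1$ plays no role in the initial term $e^{\tau\Delta}u(\cdot,t-\tau)$, where $\tau^{-n/(2\sigma_0)}$ is simply a finite constant; it is needed only where $(t-s)^{-n/(2\sigma_0)}$ is integrated in the logistic Duhamel piece, exactly as your final inequality records through the factor $\tau^{1-\frac{n}{2\sigma_0}}$.
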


\begin{proof}
For any $x\in \Omega$, $t\in (0,\Tmax)$, we set $t_0 := \max \{0, t-1\}$ and we consider the representation formula for $u$:
\begin{align*}
u(\cdot ,t) &= e^{(t-t_0)\Delta} u( \cdot, t_0) - \chi \int_{t_0}^{t} e^{(t-s)\Delta} \nabla \cdot (u (\cdot, s)\nabla v(\cdot,s))\,ds\notag\\
&\quad\,+\xi \int_{t_0}^{t} e^{(t-s)\Delta} \nabla \cdot (u (\cdot, s)\nabla w(\cdot,s)) \,ds+ \int_{t_0}^{t} e^{(t-s) \Delta } \big[ \lambda u(\cdot,s) - \mu u^k(\cdot,s) \big]\,ds\notag\\
&=: u_1(\cdot,t)+u_2(\cdot,t)+u_3(\cdot,t) + u_4(\cdot, t)
\end{align*}
and
\begin{align} \label{unorm}
0\leq u(\cdot ,t)\leq \| u_1(\cdot,t) \|_{L^{\infty}(\Omega)} +\|u_2(\cdot,t)\|_{L^{\infty}(\Omega)}
+\|u_3(\cdot,t) \|_{L^{\infty}(\Omega)}+u_4(\cdot, t).
\end{align}
We have
\begin{equation}\label{u_1}
\begin{split}                                                                                                                                                                                                                                                                                                                                                                                                                                                                                                                                                                                                                                                                                                                                                                                                                                                                                                                                                                                                                                                                                                                                                                                                                                                                                                                                                                                                                                                                                                                                                                                                                                                                                                                                                                                                                                                                                                                                                                                                                                                                                                                                                                                                                                                                                                                                                                                                                                                                                                                                                                                                                                                                                                                                                                                                                                                                                                                                                                                                                                                                     \lVert u_1 (\cdot,t) \rVert _{L^{\infty}(\Omega)} \leq \max \{\lVert u_0 \rVert_{L^{\infty}(\Omega)}, m_* k_1\} =:C_5,
\end{split}
\end{equation}
with $k_1>0$ and $m_*$ defined in \eqref{ m}. 
In fact, if $t\leq 1$, then $t_0=0$ and hence the maximum principle yields $u_1(\cdot, t) \leq \| u_0\|_{L^{\infty}(\Omega)}$. 
If $t>1$, then $t-t_0=1$ and from \eqref{etDeltaz} with $p=\infty$ and $q=1$, we deduce from \eqref{int u < bar m} that $\lVert u_1(\cdot,t)\rVert_{L^{\infty}(\Omega)} \leq k_1 (t-t_0)^{-\frac n 2} \lVert u(\cdot,t_0) \rVert_{L^1(\Omega)} \leq m_* k_1$.
We next use \eqref{etDelta nablaz} with $p=\infty$, which leads to
\begin{align}\label{u_2} 
\|u_2 (\cdot, t) \|_{L^{\infty}(\Omega)}
&\leq k_2 \chi  \int_{t_0}^{t} ( 1 + (t-s)^{-\frac 1 2 - \frac{n}{2q} }) e^{-\mu_1  (t-s)} \|u (\cdot, s)\nabla v(\cdot,s)  \|_{L^q(\Omega)} \,ds.
\end{align}
Here, we may assume that 
$\frac{n}{2}<\sigma_0<n$,
and then 
we can fix $q>n$ such that 
$1-\frac{(n-\sigma_0)q}{n\sigma_0}>0$,
which enables us to pick $\theta \in (1,\infty)$ fulfilling
$\frac{1}{\theta}<1-\frac{(n-\sigma_0)q}{n\sigma_0}$,
that is, 
$\frac{q\theta}{\theta-1}<\frac{n\sigma_0}{n-\sigma_0}$.
Then by H$\ddot{{\rm o}}$lder's inequality, we can estimate
\begin{align*}
\|u (\cdot, s)\nabla v(\cdot,s)  \|_{L^q(\Omega)} 
&\le 
\|u (\cdot, s)\|_{L^{q\theta}(\Omega)}\|\nabla v(\cdot,s)  \|_{L^{\frac{q\theta}{\theta-1}}(\Omega)}\\
&\le C_6\|u (\cdot, s)\|_{L^{q\theta}(\Omega)} \|\nabla v(\cdot,s)\|_{L^{
\frac{n\sigma_0}{n-\sigma_0}}(\Omega)}\quad 
{\rm for\ all}\ s \in (0, \Tmax),
\end{align*}
with some $C_6>0$. 
The Sobolev embedding theorem and elliptic regularity theory 
applied to the second equation in \eqref{sys1} tell us that 
$\|v(\cdot,s)\|_{W^{
1,\frac{n\sigma_0}{n-\sigma_0}}(\Omega)}
\leq C_7\|v(\cdot,s)\|_{W^{
2,\sigma_0}(\Omega)}
\le C_8$ with some $C_7, C_8>0$. 
Thus again by H$\ddot{{\rm o}}$lder's inequality and \eqref{int u < bar m}, we obtain
\begin{align*}
\|u (\cdot, s)\nabla v(\cdot,s)  \|_{L^q(\Omega)} 
&\le 
C_9\|u (\cdot, s)\|_{L^{q\theta}(\Omega)}\\
&\le C_{10}\|u (\cdot, s)\|_{L^{\infty}(\Omega)}^{\bar \theta}\quad 
{\rm for\ all}\ s \in (0, \Tmax),
\end{align*}
with some $\bar \theta \in (0,1)$, $C_9:=C_6C_8$ and $C_{10}:=C_9m_*^{1-\bar\theta}$. 
Hence, combining this estimate and \eqref{u_2}, we infer
\begin{align*}
\|u_2 (\cdot, t) \|_{L^{\infty}(\Omega)}
\leq C_{10}k_2 \chi   \int_{t_0}^{t} ( 1 +  (t-s)^{-\frac 1 2 - \frac{n}{2q} }) e^{-\mu_1  (t-s)} \|u (\cdot, s)\|_{L^{\infty}(\Omega)}^{\bar \theta}\,ds.
\end{align*}
Now fix any $T \in (0, \Tmax)$. 
Then, since $t-t_0\leq 1$, we have
\begin{align}\label{u_21}
\|u_2 (\cdot, t) \|_{L^{\infty}(\Omega)}
&\leq C_{10}k_2 \chi   \int_{t_0}^{t} ( 1 +  (t-s)^{-\frac 1 2 - \frac{n}{2q} } e^{-\mu_1  (t-s)}) \,ds \cdot \sup_{t \in [0, T]} \|u (\cdot, t)\|_{L^{\infty}(\Omega)}^{\bar \theta}\notag\\
&\leq C_{11}\chi\sup_{t \in [0, T]} \|u (\cdot, t)\|_{L^{\infty}(\Omega)}^{\bar \theta},
\end{align}
where $C_{11}:=C_{10}k_2(1+\mu_1^{\frac{n}{2q}-\frac{1}{2}}\int_0^\infty r^{-\frac 1 2 - \frac{n}{2q}} e^{-r}\,dr)>0$ is finite, 
because $-\frac{1}{2}-\frac{n}{2q}>-1$. 
Analogously we can conclude
\begin{align}\label{u_3}
\|u_3 (\cdot, t) \|_{L^{\infty}(\Omega)}
&\le C_{11}\xi \sup_{t \in [0, T]} \|u (\cdot, t)\|_{L^{\infty}(\Omega)}^{\bar \theta}.
\end{align}
We next prove that there exists a constant $C_{12}\ge0$ such that $u_4(\cdot, t)\leq C_{12}$. To this end, we firstly observe that 
 $$h(u):= \lambda u - \mu u^k \leq h(u_*) =:C_{12},$$ 
 with $u_*:= (\frac{\lambda_{+}}{\mu k})^{\frac{1}{k-1}}$. 
We have
\begin{align}\label{u_4}
u_4(\cdot, t) &=  \int_{t_0}^{t} e^{(t-s) \Delta } \big[ \lambda u(\cdot,s) - \mu u^k(\cdot,s) \big]\,ds\leq C_{12} \int_{t_0}^t \,ds\leq C_{12}.
\end{align}
Plugging \eqref{u_1}, \eqref{u_21}, \eqref{u_3} and \eqref{u_4} 
into \eqref{unorm},	we see that
\begin{align*}
0 \le u(x, t) \le C_5+C_{12}+C_{11}(\chi+\xi) \sup_{t \in [0, T]} \|u (\cdot, t)\|_{L^{\infty}(\Omega)}^{\bar \theta},
\end{align*}
which implies
\begin{align*}
\sup_{t \in [0, T]}\|u(\cdot, t)\|_{L^\infty(\Omega)} 
&\le C_{13}+C_{14} \Big(\sup_{t \in [0, T]} \|u (\cdot, t)\|_{L^{\infty}(\Omega)}\Big)^{\bar \theta}\quad {\rm for\ all}\ T \in (0, \Tmax),
\end{align*} 
with $C_{13}:=C_5+C_{12}$ and $C_{14}:=C_{11}(\chi+\xi)$. 
From this inequality with $\bar \theta \in (0,1)$, we arrive at \eqref{BoundednessU^infty}.
\end{proof}

\begin{prth1.2}
Since Theorem~\ref{BULinfty} holds, the unique local classical solution of \eqref{sys1} blows up at $t=\Tmax$ in the sense 
$\limsup_{t \nearrow \Tmax}\| u(\cdot , t) \|_{L^{\infty}(\Omega)}= \infty$ (i.e., \eqref{blowupinfty}).
By contradiction, we prove that it blows up also in $L^{\sigma}$-norm. 
In fact, if there exist $\sigma_0> \frac n 2 $ and $C>0$ 
such that 
\begin{align*}
\|u(\cdot, t)\|_{L^{\sigma_0}(\Omega)} \leq C\quad 
{\rm for\ all}\ t \in (0, \Tmax),
\end{align*} 
then, from Lemma \ref{LemmaBoundedness u nabla v}, there exists 
$\hat C>0$ such that 
\begin{equation*} 
\lVert u(\cdot,t)\rVert _{L^\infty(\Omega)}\leq \hat C\quad
{\rm for\ all}\ t \in (0, \Tmax),
\end{equation*}
which is in contradiction to \eqref{blowupinfty}, so that, 
if $u$ blows up in $L^{\infty}$-norm, 
then $u$ blows up also in $L^{\sigma}$-norm 
for all $\sigma>\frac{n}{2}$. \qed
\end{prth1.2}



\section{A lower bound for \boldmath{$\Tmax$}, 
the proof of Theorem \ref{LB}} \label{Sec5}
Let us consider $\Psi(t)= \frac {1}{\sigma} \int_{\Omega} u^{\sigma}(x,t)\,dx$, 
$u(x,t)$ the first component of solutions to \eqref{sys1} and we prove that $\Psi$ satisfies a first order differential inequality.

In the proof of Theorem \ref{LB} we need an estimate for $\int_{\Omega} u^{\sigma + 1}\, dx$. 
To this end, we use the Gagliardo--Nirenberg 
inequality  \eqref{GN ineq} with $f=u^{\frac{\sigma}{2}}$, $\mathsf{p}=\frac{2(\sigma +1)}{\sigma}$,  $\mathsf{r}=2, \mathsf{q}=2, \mathsf{s}= 2$. 
Since $\sigma>\frac n 2$, we have
\begin{align} \label{GN_u^sig+1}
\int_{\Omega} u^{\sigma + 1} \,dx
&=\|u^{\frac{\sigma}{2}}\|_{L^{\frac{2(\sigma +1)}{\sigma}}(\Omega)}^{\frac{2(\sigma + 1)}{\sigma}}\notag\\
&\leq C_{{\rm GN}} \|\nabla u^{\frac {\sigma}{2}}\|^{\frac{2(\sigma +1)}{\sigma} \theta_0}_{L^2(\Omega)} \| u^{\frac{\sigma}{2}} \|^{\frac{2(\sigma +1)}{\sigma}(1-\theta_0)} _{L^2(\Omega)}+C_{{\rm GN}}\|u^{\frac{\sigma}2}\|^{\frac{2(\sigma +1)}{\sigma}}_{L^{2}(\Omega)}\notag\\
&=C_{{\rm GN}}\Big(\int_{\Omega} |\nabla u^{\frac{\sigma}{2}}|^2 \,dx\Big)^{{\frac{\sigma +1}{\sigma}}\theta_0}\Big(\int_{\Omega}u^{\sigma} \,dx\Big)^{\frac{\sigma +1}{\sigma}(1-\theta_0)}+
C_{{\rm GN}} \Big(\int_{\Omega} u^{\sigma} \,dx\Big)^{\frac{\sigma+1}{\sigma}} \notag\\
&\leq C_{{\rm GN}} \varepsilon_1 \beta_0 \int_{\Omega}|\nabla u^{\frac{\sigma}{2}}|^2 \,dx+
C_{{\rm GN}} \varepsilon_1^{-\frac{\beta_0}{1-\beta_0}}(1-\beta_0)  \Big(\int_{\Omega} u^{\sigma} \,dx\Big)^{\frac{(\sigma+1)(1-\theta_0)}{\sigma(1-\beta_0)}}\notag\\
&\quad\,+ C_{{\rm GN}}  \Big(\int_{\Omega} u^{\sigma} \,dx\Big)^{\frac{\sigma +1}{\sigma}}\notag\\
&=c_1(\varepsilon_1)\int_{\Omega}|\nabla u^{\frac{\sigma}{2}}|^2 \,dx + c_2(\varepsilon_1) \Big(\int_{\Omega} u^{\sigma} \,dx\Big)^{\frac{2(\sigma +1)-n}{2\sigma -n}} + c_3\Big(\int_{\Omega} u^{\sigma} \,dx\Big)^{\frac{\sigma +1}{\sigma}}, 
\end{align}
with $\varepsilon_1>0$, $\theta_0:=\frac{n}{2(\sigma+1)} \in (0,1)$ and $\beta_0:= \frac{\sigma + 1}{\sigma} \theta_0 =\frac{n}{2\sigma}\in(0,1)$.
 Now, we derive a differential inequality of the first order for $\Psi(t)$. 
\begin{align}
 \label{psi'}
\Psi'(t)&= \int_\Omega u^{\sigma -1} \Delta u \,dx
- \chi\int_\Omega u^{\sigma -1}\nabla\cdot (u \nabla v )\,dx
+ \xi \int_\Omega u^{\sigma -1}\nabla \cdot(u \nabla w )\,dx\notag\\
&\quad\,+\lambda \int_\Omega u^{\sigma}\,dx- \mu \int_\Omega u^{\sigma +k-1}\,dx\notag\\
&=:\mathcal  I_1+  \mathcal I_2 + \mathcal  I_3+ \mathcal  I_4+ \mathcal I_5.
\end{align}
We have:
\begin{align}
\mathcal  I_1&=\int_{\Omega} u^{\sigma-1} \Delta u \,dx 
=- (\sigma -1) \int_{\Omega} u^{\sigma-2} |\nabla u|^2 \,dx \notag\\
&= -\frac{4(\sigma - 1)}{\sigma^2} \int_{\Omega} 
|\nabla u^{\frac{\sigma}{2}}|^2\,dx,\label{I1}
\intertext{and}
\mathcal  I_2 &=- \chi  \int_{\Omega} u^{\sigma-1} \nabla\cdot (u \nabla v)\, dx 
= \chi \frac{\sigma -1}{\sigma}  \int_{\Omega} \nabla u^{\sigma} \cdot\nabla v \,dx \notag\\
&=
- \chi \frac{\sigma -1}{\sigma}  \int_{\Omega} u^{\sigma} \Delta v \,dx \notag\\
&= - \chi \beta \frac{\sigma -1}{\sigma}  \int_{\Omega} u^{\sigma}  v \,dx + \chi \alpha \frac{\sigma -1}{\sigma}  \int_{\Omega} u^{\sigma +1}\,dx \notag\\
&\leq \chi \alpha \frac{\sigma -1}{\sigma}  \int_{\Omega} u^{\sigma +1}\,dx \label{I2}
\end{align}
as well as
\begin{align}
\mathcal  I_3&=\xi \int_\Omega u^{\sigma -1}\nabla \cdot(u \nabla w )\,dx\notag\\[1mm]
&= \xi \delta \frac {\sigma -1}{\sigma}  \int_\Omega u^{\sigma} w \,dx - \xi \gamma \frac {\sigma -1} {\sigma}  \int_\Omega u^{\sigma + 1}\,dx \notag\\[1mm]
&\leq
\xi \delta \frac {\sigma -1} {\sigma} \Big(  \int_\Omega u^{\sigma +1} \,dx\Big)^{\frac{\sigma}{\sigma + 1 }}   \Big(  \int_\Omega w^{\sigma +1} \,dx\Big)^{\frac{1}{\sigma + 1 }}- \xi \gamma \frac {\sigma -1} {\sigma}  \int_\Omega u^{\sigma + 1}\,dx \notag\\[1mm]
&\leq \xi \gamma \frac {\sigma -1} {\sigma} \int_\Omega u^{\sigma +1} \,dx- \xi \gamma \frac {\sigma -1} {\sigma}  \int_\Omega u^{\sigma + 1}\,dx\notag\\[1mm]
&=0\label{I3},
\end{align}
where the last inequality holds from $(\int_\Omega w^{\sigma+1} )^{\frac{1}{\sigma+1}}
\le \frac{\gamma}{\delta}(\int_\Omega u^{\sigma+1})^{\frac{1}{\sigma+1}}$ established by standard testing procedures in the equation for $w$. 
We now use \eqref{GN_u^sig+1} in \eqref{I2} to obtain 
\begin{equation}\label{I2+I3 bis}
\begin{split}
\mathcal  I_2 \leq  \tilde c_1(\varepsilon_1)\int_{\Omega}|\nabla u^{\frac{\sigma}{2}}|^2 \,dx + \tilde c_2(\varepsilon_1)\Big(\int_{\Omega} u^{\sigma} \,dx\Big)^{\frac{2(\sigma +1)-n}{2\sigma -n}} + \tilde c_3 \Big(\int_{\Omega} u^{\sigma} \,dx\Big)^{\frac{\sigma +1}{\sigma}},
\end{split}
\end{equation}
with $\tilde c_1(\varepsilon_1):=\chi\alpha\frac {\sigma -1}{\sigma} c_1(\varepsilon_1), \ \tilde c_2(\varepsilon_1):=\chi\alpha\frac {\sigma -1}{\sigma} c_2(\varepsilon_1), \ \tilde c_3:=\chi\alpha\frac {\sigma -1}{\sigma} c_3$. 
Also, using H$\ddot{{\rm o}}$lder's inequality, we see that
\begin{align}\label{I4+I5}
\mathcal  I_4+\mathcal  I_5 &= \lambda \int_\Omega u^{\sigma}\,dx -   \mu 
\int_\Omega u^{\sigma +k-1} \,dx \notag\\
&\leq \lambda_{+} \int_\Omega u^{\sigma}\,dx -
\mu |\Omega|^{\frac{1-k}{\sigma}}
\Big (\int_\Omega u^{\sigma} \,dx\Big)^{\frac{\sigma +k-1}{\sigma} }.
\end{align}
Substituting \eqref{I1}, \eqref{I3}, \eqref{I2+I3 bis}  and \eqref{I4+I5} in \eqref{psi'} we 
get
\begin{align}\label{psi'bis}
\Psi'&\leq B_1 \Psi + B_2 \Psi^{\frac{\sigma + 1}{\sigma}}
+ B_3 \Psi^{\frac{2(\sigma +1)-n}{2\sigma -n}} 
- B_4 \Psi^{\frac{\sigma +k-1}{\sigma}}\notag\\
&\quad\,+\Big(\tilde c_1(\varepsilon_1)- \frac{4(\sigma-1)}{\sigma^2}  \Big)\int_\Omega |\nabla u^{\frac{\sigma}{2}}|^2\,dx,
\end{align}
with $B_1:= \lambda_{+} \sigma, \ B_2:=\tilde c_3 \sigma^{\frac{\sigma + 1}{\sigma}}, \ B_3:=\tilde c_2(\varepsilon_1) \sigma^{\frac{2(\sigma +1)-n}{2\sigma -n}} , \ B_4:= \mu |\Omega|^{\frac{1-k}{\sigma}} { {\sigma}^{\frac{\sigma +k-1}{\sigma} }}$.
In \eqref{psi'bis} we choose $\varepsilon_1$ such that 
$\tilde c_1(\varepsilon_1)- \frac{4(\sigma-1)}{\sigma^2}\leq 0$ and neglecting the negative terms, we obtain 
\begin{align}\label{psi'final}
\Psi'\leq B_1 \Psi + B_2 \Psi^{\frac{\sigma + 1}{\sigma}}
+ B_3 \Psi^{\frac{2(\sigma +1)-n}{2\sigma -n}}.
\end{align}
Integrating \eqref{psi'final} from $0$ to $\Tmax$, we arrive to \eqref{lower Tmax in Lp}.
\hfill \qed

\begin{remark} 
Since $u$ blows up in $L^{\sigma}(\Omega)$-norm 
at finite time $\Tmax$, then there exists a time 
$t_1\in [0,\Tmax)$, where $\Psi(t_1)=\Psi_0$. 
As a consequence, $\Psi(t) \geq \Psi_0$, $t\in [t_1, \Tmax)$ 
so that $\Psi^{\rho} \leq \Psi^{\gamma_2} \Psi_0^{\rho-\gamma_2}$ 
for some $\rho \leq \gamma_2$. 
Moreover, taking into account that $1< \frac{\sigma + 1}{\sigma} \leq \frac{2(\sigma +1)-n}{2\sigma -n}=\gamma_2$, it follows that 
\begin{align}\label{psi'bisf}
\Psi'\leq  A \Psi^{\gamma_2} \quad {\rm in} \  (t_1,\Tmax),
\end{align}
with $ A:= B_1 \Psi_0^{-\frac{2}{2\sigma -n}} + B_2 \Psi_0^{-\frac{n}{\sigma(2\sigma -n)}} + B_3$.
Integrating \eqref{psi'bisf} from $t_1$ to $\Tmax$, 
we derive the following explicit lower bound of the blow-up time 
$\Tmax$: 
\begin{equation*}
\Tmax \geq \frac{1}{A (\gamma_2 -1)\Psi_0^{\gamma_2 -1}}.
\end{equation*}
\end{remark}

\section*{Acknowledgments}
The authors would like to express their gratitude to 
Professor Stella Vernier-Piro for giving them the opportunity of 
a joint study and her encouragement. 
YC, YT and TY are partially supported by 
Tokyo University of Science Grant for International Joint Research. 
MM is a member of the Gruppo Nazionale per l'Analisi Matematica, la Probabilit$\grave{\rm a}$ e le loro Applicazioni (GNAMPA) of the Istituto Nazionale di Alta Matematica (INdAM) and is partially supported by the research project {\it Evolutive and stationary Partial Differential Equations with a focus on biomathematics} (Fondazione di Sardegna 2019).


\end{document}